\newtheorem{theorem}{Theorem}[section]
\newtheorem{thm}[theorem]{Theorem}
\newtheorem{lemma}[theorem]{Lemma}
\newtheorem{lem}[theorem]{Lemma}
\newtheorem{remark}[theorem]{Remark}
\newtheorem{proposition}[theorem]{Proposition}
\newtheorem{prop}[theorem]{Proposition}
\newtheorem{corollary}[theorem]{Corollary}
\theoremstyle{definition}
\newtheorem{definition}[theorem]{Definition}
\newtheorem{defn}[theorem]{Definition}
 \newtheorem{nott}{Notation}
\theoremstyle{remark}
\numberwithin{equation}{section}
 \DeclareMathOperator{\RE}{Re}
 \DeclareMathOperator{\IM}{Im}
 \DeclareMathAlphabet{\mathpzc}{OT1}{pzc}{m}{it}
 \DeclareMathAlphabet{\mathsfsl}{OT1}{cmss}{m}{sl}
  \newcommand{\FH}{\mathfrak{H}}
\newcommand{\dif}{\mathrm{d}}
\newcommand{\abs}[1]{\left\vert#1\right\vert}
\newcommand{\set}[1]{\left\{#1\right\}}
\newcommand{\norm}[1]{\left\Vert#1\right\Vert}
\newcommand{\E}{\mathbb{E}}
\newcommand{\R}{\mathbb{R}}
 \newcommand{\tensor}[1]{\mathsf{#1}}
 \newcommand{\Rnum}{\mathbb{R}}
 \newcommand{\Cnum}{\mathbb{C}}
 \newcommand{\innp}[1]{\langle {#1}\rangle}
\newcommand{\Be}{\begin{equation}}
\newcommand{\Ee}{\end{equation}}
\newcommand{\Bs}{\begin{split}}
\newcommand{\Es}{\end{split}}
\newcommand{\Bes}{\begin{equation*}}
\newcommand{\Ees}{\end{equation*}}
\newcommand{\BT}{\begin{thm}}
\newcommand{\ET}{\end{thm}}
\newcommand{\Bp}{\begin{proof}}
\newcommand{\Ep}{\end{proof}}
\newcommand{\BL}{\begin{lem}}
\newcommand{\EL}{\end{lem}}
\newcommand{\BP}{\begin{proposition}}
\newcommand{\EP}{\end{proposition}}
\newcommand{\BC}{\begin{corollary}}
\newcommand{\EC}{\end{corollary}}
\newcommand{\BR}{\begin{remark}}
\newcommand{\ER}{\end{remark}}
\newcommand{\BD}{\begin{defn}}
\newcommand{\ED}{\end{defn}}
\newcommand{\BI}{\begin{itemize}}
\newcommand{\EI}{\end{itemize}}
\newcommand{\mi}{{\rm i}}
\begin{document}
\title[Complex Fractional Ornstein-Uhlenbeck Processes]{Parameter Estimation of Complex Fractional Ornstein-Uhlenbeck Processes with Fractional Noise }
\author[Y. Chen]{Yong Chen}
\address{School of Mathematics, Hunan University of Science and Technology, Xiangtan, 411201, Hunan, China}
\email{chenyong77@gmail.com; zhishi@pku.org.cn}
\author[Y. Hu]{Yaozhong Hu}
\address{Department of Mathematics, the University of Kansas, Lawrence, 66045, Kansas,USA}
\email{yhu@ku.edu}
\author[Z. Wang]{Zhi Wang}
\address{School of Sciences, Ningbo University of Technology,
 Ningbo 315211, Zhejiang, China}
\email{wangzhi1006@hotmail.com}
\begin{abstract}
We obtain  strong consistency and  asymptotic normality of a least squares estimator of the  drift coefficient for complex-valued Ornstein-Uhlenbeck processes  disturbed  by  fractional noise, extending the result of Y. Hu and D. Nualart, [Statist. Probab. Lett., 80 (2010), 1030-1038] to a special 2-dimensions.   The strategy is to exploit the Garsia-Rodemich-Rumsey inequality and complex fourth moment theorems. The main ingredients of this paper are the sample path regularity of a multiple Wiener-It\^{o} integral and two  equivalent conditions of complex fourth moment theorems in terms of the contractions of integral kernels and complex Malliavin derivatives.\\
{\bf Keywords :} Complex Wiener-It\^{o} multiple integral; fractional Brownian motion; fractional Ornstein-Uhlenbeck process; fourth moment theorems; strong consistency; asymptotic normality.\\
{\bf MSC 2000:}60H07; 60F05; 62M09.

\end{abstract}
\maketitle
\section{Introduction}
To model the Chandler wobble, or variation of latitude conerning with  the rotation of the earth, M. Arat\'{o}, A.N. Kolmogorov and Ya.G. Sinai \cite{arta3} (see also \cite{arat})
proposed the following   stochastic  linear  equation
\begin{equation}\label{cp}
  \dif Z_t=-\gamma Z_t\dif t+ \sqrt{a}\dif \zeta_t\,,\quad t\ge 0\,,
\end{equation}%
where $Z_t=X_1(t)+\mi X_2(t)$ is a complex-valued process, $\gamma=\lambda-\mi \omega,\,\lambda>0$, $a>0$ and $\zeta_t
$ is a complex Brownian motion. 
It is also suggested in \cite{arat} that the Brownian motion
in \eqref{cp} may be replaced by other processes.
In this paper we consider the statistical estimator of $\gamma$ when the complex Brownian motion $\zeta$ in  \eqref{cp} is replaced by a complex fractional Brownian motion
$\zeta_t=\frac{B^1_t+i B^2_t}{\sqrt 2} $, where
$(B^1_t, B^2_t)$ is a two dimensional fractional Brownian  motion
 with $H\in [\frac12,\frac34)$.
[We shall fix the Hurst parameter and then omit the explicit dependence of the process on the Hurst parameter.]
From now on we assume that $\zeta$ is a complex fractional Brownian motion of Hurst parameter $H\in (1/2, 3/4)$.

%
To compare with the work in \cite{huNua}, we
   write \eqref{cp} as
\begin{equation}\label{model}
\begin{bmatrix}
\dif X_1(t)\\ \dif X_2(t) \end{bmatrix}
=\begin{bmatrix} -\lambda & -\omega\\ \omega & -\lambda  \end{bmatrix}
\begin{bmatrix} X_1(t)\\ X_2(t) \end{bmatrix} \dif t +\sqrt{\frac{a}{2}}
\begin{bmatrix} dB^1_t\\ dB^2_t \end{bmatrix}\,.
\end{equation}
Thus \eqref{cp} can be considered as  a particular two dimensional
Langevin equation driven by fractional Brownian motions.
However, we find it is more convenient to use the complex valued equation \eqref{cp}.

Motivated by the work of  Hu and Nualart \cite{huNua}
we also consider a least squares estimator for $\gamma$.
To this end,  we intuitively rewrite \eqref{cp}  as
\begin{equation*}
   \dot{Z}_t+\gamma Z_t=\sqrt{a}\dot{\zeta}_t\,, \quad 0\le t\le T\,.
\end{equation*}
We minimize $\int_0^T\abs{ \dot{Z}_t+\gamma Z_t}^2\dif t$  to obtain a least squares estimator of $\gamma$ as follows.
\begin{equation}\label{hat gamma}
   \hat{\gamma}_T=-\frac{\int_0^T \bar{Z}_t\dif Z_t}{\int_0^T \abs{Z_t}^2\dif t}=\gamma-\sqrt{a}\frac{\int_0^T \bar{Z}_t\dif \zeta_t}{\int_0^T \abs{Z_t}^2\dif t}.
\end{equation}

The main results of the present paper are   the strong consistency and the asymptotic normality of the estimator $\hat{\gamma}_T$
which we state as follows.

\begin{thm}
\label{strong law}
  Let $H\in [\frac12,\frac34)$.
\begin{enumerate}
\item[(i)]
The least squares estimator
 $\hat{\gamma}_T$ is strongly consistent.  Namely,
$\hat{\gamma}_T$  converges to $\gamma$ almost surely
as $T\rightarrow \infty$.
\item[(ii)] $\sqrt{T}\left( \hat{\gamma}_T-\gamma\right)  $ is asymptotically normal. Namely,
\begin{equation}
   \sqrt{T}[\hat{\gamma}_T-\gamma]\stackrel{ {law}}{\to}  \mathcal{N}(0,\frac{1}{2 d^2 a} \tensor{C})\quad\hbox{
   as $T\rightarrow \infty$}\,,
\end{equation}
where
$\tensor{C}=\begin{bmatrix} \sigma^2+c & b\\ b & \sigma^2-c  \end{bmatrix}$ with
\begin{eqnarray}
\sigma^2
&=& \frac{2}{\Gamma(2-2H)^2}\int_{[0,\infty)^2} \dif x\dif y \frac{(xy)^{1-2H}}{(x+y)(x+\bar{\gamma})(y+\gamma)} \nonumber\\
&&\qquad + \frac{\Gamma^2(2H-1)}{2
     \lambda}\big(\frac{2}{\abs{\gamma}^{4H-2}}+\frac{1}{\gamma^{4H-2}}+\frac{1}{\bar{\gamma}^{4H-2}}\big)
     \label{limlim1}
\\
 c +\mi b
      &=&\frac{2}{\Gamma(2-2H)^2} \int_{[0,\infty)^2}\frac{(xy)^{1-2H}}{(y+\gamma)^2} \Big[ \frac{1}{x+y}+\frac{1}{x+\gamma} \Big] \dif x\dif y.    \label{ab c}\\
d&=&      \frac{\Gamma(2H-1)}{2\lambda}\Big(\frac{1}{\gamma^{2H-1}}+\frac{1}{\bar{\gamma}^{2H-1}}\Big)\,.
\label{d d}
  \end{eqnarray}
In the special case when  $H=\frac12$, we have
\begin{equation}
   \sqrt{T}[\hat{\gamma}_T-\gamma]\stackrel{ {law}}{\to}  \mathcal{N}(0,\frac{\lambda}{4a}\mathrm{Id}_2)\,,
\end{equation}
where $\mathrm{Id}_2$ is a $2\times 2$ identity matrix.
\end{enumerate}
\end{thm}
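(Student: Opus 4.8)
The plan is to reduce the statement about $\hat\gamma_T$ to the asymptotic behavior of two objects: the denominator $\frac1T\int_0^T|Z_t|^2\,\dif t$ and the numerator $\frac{1}{\sqrt T}\int_0^T \bar Z_t\,\dif\zeta_t$. First I would recall from \eqref{hat gamma} that $\hat\gamma_T-\gamma=-\sqrt a\,\frac{\int_0^T\bar Z_t\,\dif\zeta_t}{\int_0^T|Z_t|^2\,\dif t}$, so everything hinges on the joint behavior of these two. For part (i), I would show that $\frac1T\int_0^T|Z_t|^2\,\dif t$ converges almost surely to a deterministic positive constant — this is an ergodic-type statement for the stationary (complex fractional OU) solution, which one gets by writing $Z_t$ via its stationary representation $Z_t=\sqrt a\int_{-\infty}^t e^{-\gamma(t-s)}\,\dif\zeta_s$, computing $\E|Z_t|^2$ explicitly (this is where the constant $2d/\sqrt{\phantom{a}}$, i.e.\ the combination giving $d$ in \eqref{d d}, appears via $\Gamma(2H-1)$ and the covariance of fBm), and then upgrading the $L^2$ convergence of the time average to almost-sure convergence using the Garsia--Rodemich--Rumsey inequality together with the sample-path regularity of the relevant multiple Wiener--It\^o integral (the second chaos component of $|Z_t|^2$), which the excerpt advertises as a main ingredient. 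Simultaneously one shows $\frac1T\int_0^T\bar Z_t\,\dif\zeta_t\to 0$ a.s.\ by the same GRR-plus-regularity mechanism applied to this (Skorohod/Wiener--It\^o) integral, yielding strong consistency.

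For part (ii), the core is a central limit theorem for $F_T:=\frac{1}{\sqrt T}\int_0^T\bar Z_t\,\dif\zeta_t$. I would first expand $\int_0^T\bar Z_t\,\dif\zeta_t$ (the $\dif\zeta_t$ being a divergence/Skorohod integral for $H>1/2$) using the stationary representation of $Z_t$, producing a double stochastic integral: a term lying in the complex second Wiener chaos plus a deterministic/trace correction coming from the Malliavin derivative of $\bar Z_t$ contracted against the integrator (this correction is $O(1/\sqrt T)$ after normalization and is what ultimately contributes the second, $\Gamma^2(2H-1)$ piece in \eqref{limlim1}). So $F_T$ is, up to negligible terms, a sequence of complex second-chaos random variables. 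I would then compute $\lim_{T\to\infty}\Var(F_T)$ and the relevant pseudo-covariance $\lim_{T\to\infty}\E[F_T^2]$ — these are exactly the double integrals $\sigma^2$ in \eqref{limlim1} and $c+\mi b$ in \eqref{ab c}, obtained by plugging in the fBm covariance kernel $H(2H-1)|u-v|^{2H-2}$ (equivalently its representation through $\Gamma(2-2H)$ and the $(xy)^{1-2H}$ weight via a Fourier/Laplace-type identity) and carrying out the $s,t$ integrations over $[0,T]^2$, keeping the leading-order-in-$T$ term. Then I would invoke the \emph{complex fourth moment theorem} (the other headline ingredient): it suffices to check that the fourth cumulant, or equivalently the norms of all nontrivial contractions $f_T\otimes_1 f_T$ of the kernels, tend to zero; a standard but somewhat technical estimate on these contractions — using $H<3/4$ crucially, since this is exactly the threshold where $\int_0^\infty u^{4H-4}\,\dif u$-type quantities remain integrable — gives this. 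Combining the kernel convergence with the fourth-moment criterion yields $F_T\stackrel{law}{\to}\mathcal N_{\Cnum}$ with the stated complex covariance structure.

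Finally I would assemble the pieces: by Slutsky, $\sqrt T(\hat\gamma_T-\gamma)=-\sqrt a\,\frac{F_T}{\frac1T\int_0^T|Z_t|^2\,\dif t}$ converges in law to $-\sqrt a\cdot(\text{Gaussian limit})/(\text{that positive constant})$, where the constant in the denominator is the real quantity $\frac{a\cdot\Gamma(2H-1)}{\lambda}\big(\gamma^{1-2H}+\bar\gamma^{1-2H}\big)/(\text{const})=2d/\sqrt{\phantom a}$-type expression, i.e.\ $\sqrt a$ times $2d$ after tracking constants; dividing the complex-Gaussian covariance by the square of this constant produces the factor $\frac{1}{2d^2a}$ in front of $\tensor C$, and translating the complex covariance $(\Var,\text{pseudo-cov})=(\sigma^2,\,c+\mi b)$ into the real $2\times2$ covariance matrix of $(\RE,\IM)$ gives precisely $\tensor C=\begin{bmatrix}\sigma^2+c & b\\ b& \sigma^2-c\end{bmatrix}$. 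The $H=\frac12$ specialization is then just a direct evaluation: the first integral in \eqref{limlim1} vanishes in the limit or simplifies, $\Gamma(2H-1)=\Gamma(1)=1$, $c=b=0$, $\sigma^2=\frac{1}{\lambda}$ up to constants, and $d=\frac{1}{2\lambda}\cdot 2=\frac1\lambda\cdot\frac12$, producing $\frac{\lambda}{4a}\mathrm{Id}_2$.

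The main obstacle I anticipate is twofold and concentrated in part (ii): first, correctly organizing the Wiener chaos decomposition of the Skorohod integral $\int_0^T\bar Z_t\,\dif\zeta_t$ in the \emph{complex} setting — keeping track of which pairings of $B^1,B^2$ (equivalently, of $\zeta$ and $\bar\zeta$) contribute to the variance versus the pseudo-covariance, and isolating the deterministic trace term that feeds the $\Gamma^2(2H-1)$ summand; and second, the contraction estimates needed to apply the complex fourth moment theorem, where one must show $\|f_T\otimes_1 f_T\|\to 0$ uniformly over the relevant index — this is where the restriction $H<3/4$ is used in an essential way and where the Garsia--Rodemich--Rumsey inequality and the sample-path regularity of multiple Wiener--It\^o integrals (stated earlier) do the heavy lifting. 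The almost-sure convergence in part (i) is comparatively routine once those tools are in hand, since it only requires $L^2$ control of time averages plus a modulus-of-continuity bound.
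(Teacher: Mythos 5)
Your overall skeleton (a.s.\ limit of the denominator, a CLT for the normalized numerator via a complex fourth moment theorem, then Slutsky) is the same as the paper's, but there is a genuine conceptual error in how you set up the numerator in part (ii). You claim that expanding $\int_0^T\bar Z_t\,\dif\zeta_t$ produces a second-chaos term \emph{plus} a deterministic trace correction ``coming from the Malliavin derivative of $\bar Z_t$,'' that this correction is $O(1/\sqrt T)$ after normalization, and that it is the source of the $\Gamma^2(2H-1)$ summand in \eqref{limlim1}. None of this is right. Since $\bar Z_t=\sqrt a\int_0^t e^{-\bar\gamma(t-s)}\,\dif\bar\zeta_s$ is anti-holomorphic, its holomorphic Malliavin derivative vanishes, and the divergence with respect to $\zeta$ yields \emph{exactly} the $(1,1)$-chaos element $I_{1,1}(\psi_T)$ with $\psi_T(r,s)=e^{-\bar\gamma(r-s)}\mathbf{1}_{\{0\le s\le r\le T\}}$ — there is no trace term. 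This matters quantitatively: a would-be trace term would have the form $\int_0^T\int_0^t e^{-\bar\gamma(t-s)}\phi(s,t)\,\dif s\,\dif t$, which grows like $T$, i.e.\ like $\sqrt T$ after dividing by $\sqrt T$, and would destroy the CLT rather than be negligible. Moreover, the $\Gamma^2(2H-1)$ piece of $\sigma^2$ is not a trace contribution at all: it arises from the orderings $\{s_1\le s_2\le t_2\le t_1\}$ etc.\ in the four-fold integral computing $\lim_T\frac{1}{T}\E[\abs{I_{1,1}(\psi_T)}^2]$ (the paper's Lemma~\ref{lem 5_2 ht}, regions $\Delta_3$–$\Delta_6$). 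As written, your accounting of the limiting variance is therefore not a proof.

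A secondary misattribution: you place the Garsia--Rodemich--Rumsey inequality at the heart of the contraction estimates for the fourth moment theorem. In the paper GRR plays no role in part (ii); the contractions $\frac1T\psi_T\otimes_{0,1}\psi_T$, $\frac1T\psi_T\otimes_{1,0}h_T$, etc., are killed by direct kernel bounds of the form $\abs{\psi_T\otimes_{0,1}\psi_T(t,s)}\le c_{\lambda,H}\abs{t-s}^{2H-2}$ followed by an explicit computation showing the resulting $\FH^{\otimes2}$-norm is $O(T^{8H-6})\to0$ for $H<\tfrac34$. GRR is used only in part (i), to interpolate $X_t=I_{1,1}(\psi_t)$ between integer times ($\abs{X_{t}-X_{s}}\le R_{p,q}n^{q/p}$ on $[n,n+1]$), combined with Borel--Cantelli and hypercontractivity along the integer subsequence. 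Also note that the paper gets the a.s.\ limit of $\frac1T\int_0^T\abs{Z_t}^2\,\dif t$ from the ergodic theorem for the stationary solution (whose correlation decays like $\abs{s}^{2H-2}$), not from GRR; your proposed chaos-decomposition route for the denominator could be made to work but is neither necessary nor what you actually justify.
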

\begin{remark} An important new feature for the case of  fractional Ornstein-Uhlenbeck process ($H\in (1/2, 3/4)$) is that  the limiting distribution is no longer independent Gaussian as in the case of Brownian motion case ($H=1/2$). We will discuss exclusively the case $H\not=1/2$ since the case $H=1/2$ is easy.
\end{remark}

A minor difference between the
 case of  one dimensional fractional
Orstein-Uhlenbeck process considered in \cite{huNua} and our complex case is that in our least squares  estimator $\hat\gamma$ defined by
\eqref{hat gamma}, we have $\int_0^T \bar{Z}_t\dif Z_t$
in the numerator, while in \cite{huNua} it is
$\int_0^T X_t\dif X_t$.   However, this minor difference causes a
big  unpleasant  trouble. By using It\^o formula the later is expressed as $X_T^2$ plus
another manageable  term.  This is critical in the proof of
the strong consistency of the estimator since it allows us to use a famous theorem of Pickands in \cite{huNua}.  However,
we cannot no longer apply It\^o formula to $\int_0^T \bar{Z}_t\dif Z_t$ to obtain similar identity. To get around this difficulty we shall use another famous result,  the Garsia-Rodemich-Rumsey inequality \cite[Theorem 2.1]{hyz 17}.
%
%

%

To show the asymptotic normality, we may use a multi-dimensional
fourth moment theorem. However, we develop a complex version of the fourth moment theorem which is easier to use   in our case. To state the theorem we
denote $\alpha_H=H(2H-1)$ and  $\phi(s,t) =\alpha_H \abs{s-t}^{2H-2}$ and define the Hilbert space
\begin{align}
   \FH&:=L^2_{\phi}=\{f |\,f:\R_{+}\to \R,\abs{f}_{\phi}^2:=\int_0^{\infty}\int_0^{\infty}f(s)f(t)\phi(s,t)\dif s\dif t <\infty \}.\label{e.1.9}
\end{align}
Now the  theorem is stated as follows.
\begin{thm}[Fourth Moment Theorems]\label{equilent cond}
Let   $\set{F_{k}=I_{m,n}(f_k)}$ with $f_k\in \FH_{\Cnum}^{\odot m}\otimes \FH_{\Cnum}^{\odot n}$  be  a  sequence of  $(m,n)$-th complex Wiener-It\^{o} multiple integrals
(see the next section for  a discussion),   with $m$ and $n$ fixed and
$m+n\ge 2$.  Suppose that as $k\to \infty$, $E[\abs{F_k}^2]\to \sigma^2$ and $E[F_k^2]\to c+\mi b $, where $\abs{\cdot}$ is the absolute value (or modulus) of a complex number and  $c,b\in \Rnum$. Then   the following statements  are equivalent:
\begin{itemize}
\item[\textup{(i)}]The sequence $(\RE F_k,\,\IM  F_k)$ converges in law to a bivariate normal distribution with covariance matrix $\tensor{C}= \frac{1}{2}\begin{bmatrix}\sigma^2+c & b\\ b & \sigma^2-c  \end{bmatrix}$,
\item[\textup{(ii)}]$E[\abs{F_k}^4]\to c^2+b^2+2\sigma^4$.
\item[\textup{(iii)}]  $\norm{f_k\otimes_{i,j} f_k}_{\FH^{\otimes ( 2(l-i-j))}}\to 0$ and $\norm{f_k{\otimes}_{i,j} h_k}_{\FH^{\otimes ( 2(l-i-j))}}\to 0 $ for any $0<i+j\le l-1$ where $l=m+n$ and $h_k$ is the kernel of $\bar{F}_k$, i.e., $\bar{F}_k=I_{n,m}(h_k)$.
\item[\textup{(iv)}] $\norm{D F_k}^2_{\FH}$, $\norm{D \bar{F}_k}^2_{\FH}$ and $\innp{D F_k,\, {D} \bar{F}_k}_{\FH}$ converge to a constant in $L^2(\Omega)$ as $k$ tends to infinity, where $D$ is the complex Malliavin derivatives. That is to say, $\mathrm{Var}(\norm{D F_k}^2_{\FH})\to 0$, $\mathrm{Var}(\norm{D \bar{ F}_k}^2_{\FH})\to 0$ and $\mathrm{Var}(\innp{D F_k,\, {D} \bar{F}_k}_{\FH} )\to 0$ as $k$ tends to infinity.
\end{itemize}
\end{thm}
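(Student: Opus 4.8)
The plan is to prove the chain of equivalences by combining the real-variable multidimensional fourth moment theorem of Peccati--Tudor with explicit bookkeeping that relates the real and imaginary parts of the complex multiple integral $F_k=I_{m,n}(f_k)$ to an ordinary vector of real multiple integrals of fixed order $l=m+n$. First I would recall that $\RE F_k$ and $\IM F_k$ can each be written as a (real) multiple Wiener--It\^{o} integral of order $l$ with respect to the underlying isonormal Gaussian process on $\FH$; concretely, by symmetrizing the real and imaginary parts of $f_k$ (and of its conjugate kernel $h_k$) one obtains real symmetric kernels $g_k^{(1)},g_k^{(2)}\in\FH^{\odot l}$ with $\RE F_k=I_l(g_k^{(1)})$, $\IM F_k=I_l(g_k^{(2)})$. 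The hypotheses $E[|F_k|^2]\to\sigma^2$ and $E[F_k^2]\to c+\mi b$ translate, via the isometry formula, into convergence of the full $2\times2$ covariance matrix of $(\RE F_k,\IM F_k)$ to $\tensor{C}$; this is the computation $E[|F_k|^2]=E[(\RE F_k)^2]+E[(\IM F_k)^2]$ and $E[F_k^2]=E[(\RE F_k)^2]-E[(\IM F_k)^2]+2\mi E[\RE F_k\,\IM F_k]$.

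Next I would invoke the Peccati--Tudor theorem: for a vector of multiple integrals of fixed order with convergent covariance, joint convergence to the Gaussian law with that covariance is equivalent to each component converging to a one-dimensional Gaussian, which in turn (by the Nualart--Peccati fourth moment theorem) is equivalent to the fourth cumulant of each component tending to zero, and equivalent to all contraction norms $\|g_k^{(r)}\otimes_p g_k^{(r)}\|\to0$ for $0<p<l$. This immediately gives (i)$\iff$(ii)$\iff$(iii)$\iff$(iv) \emph{once} one checks that the complex-valued quantities in (ii), (iii), (iv) are equivalent reformulations of the real ones. For (ii): expand $E[|F_k|^4]=E[(\RE F_k)^4]+2E[(\RE F_k)^2(\IM F_k)^2]+E[(\IM F_k)^4]$ and use the product formula / Gaussian-limit moment identities to see that $E[|F_k|^4]\to c^2+b^2+2\sigma^4$ is exactly the statement that the joint fourth moments match those of $\mathcal{N}(0,\tensor{C})$, hence equivalent to (i). For (iii): one must show that the complex contractions $f_k\otimes_{i,j}f_k$ and $f_k\otimes_{i,j}h_k$ (contracting $i$ indices from the $\FH^{\odot m}$ slot and $j$ from the $\FH^{\odot n}$ slot) span, as $(i,j)$ ranges over $0<i+j\le l-1$, the same vanishing condition as the real contractions $g_k^{(r)}\otimes_p g_k^{(s)}$; this is a linear-algebra / triangle-inequality argument expressing each real contraction as a finite linear combination of real and imaginary parts of the complex ones and vice versa. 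For (iv): by the standard identity $\|DF_k\|_{\FH}^2 = \sum$ (a multiple of) $I_{2(l-p)}$ of contractions plus a constant, the $L^2(\Omega)$-convergence of $\|DF_k\|_{\FH}^2$, $\|D\bar F_k\|_{\FH}^2$, $\langle DF_k,D\bar F_k\rangle_{\FH}$ to constants is equivalent to the vanishing of the variances, which is equivalent to the vanishing of all the relevant contraction norms, i.e. to (iii).

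The main obstacle I expect is the careful translation of the complex contraction conditions in (iii) into the real ones of Peccati--Tudor: a complex multiple integral $I_{m,n}$ carries a bigraded structure (holomorphic order $m$, antiholomorphic order $n$) and its contractions are indexed by a \emph{pair} $(i,j)$ rather than a single integer, so one has to verify that no information is lost when passing to the single-order-$l$ real picture — in particular that $\|f_k\otimes_{i,j}f_k\|\to0$ for all admissible $(i,j)$ really is equivalent to $\|g_k^{(r)}\otimes_p g_k^{(s)}\|\to0$ for all $p$ and all $r,s\in\{1,2\}$, and that the conjugate kernel $h_k$ (needed because $\overline{I_{m,n}(f_k)}=I_{n,m}(h_k)$) must be included to capture the contractions that mix $F_k$ with $\bar F_k$. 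Once this dictionary is set up cleanly — essentially Lemma-level bookkeeping that I would isolate as a preliminary lemma on the relation between complex and real contractions and Malliavin derivatives — the equivalences follow from the known real fourth moment machinery. I would organize the write-up as: (a) reduce to the real vector $(\RE F_k,\IM F_k)$ and record the covariance computation; (b) state the contraction dictionary; (c) apply Peccati--Tudor and Nualart--Peccati to close the loop (i)$\Rightarrow$(ii)$\Rightarrow$(iii)$\Rightarrow$(iv)$\Rightarrow$(i).
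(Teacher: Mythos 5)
Your overall strategy---rewrite $\RE F_k=I_l(g_k^{(1)})$, $\IM F_k=I_l(g_k^{(2)})$ as real multiple integrals of order $l=m+n$ over $\FH\oplus\FH$ and invoke Peccati--Tudor / Nualart--Peccati---is genuinely different from the paper's, which never leaves the complex framework: the paper proves $(\mathrm{ii})\Rightarrow(\mathrm{iii})$ by an explicit complex product-formula identity (Lemma~\ref{lem 42}), $(\mathrm{iii})\Rightarrow(\mathrm{iv})$ by computing $\mathrm{Var}(\norm{DF_k}^2_{\FH})$ etc.\ in terms of the complex contractions (Proposition~\ref{etaxinu}), and $(\mathrm{iv})\Rightarrow(\mathrm{i})$ by the characteristic-function PDE argument of Nualart--Ortiz-Latorre. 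Indeed the paper's own remark describes your route as the ``indirect method'' of the earlier reference for $(\mathrm{i})\Leftrightarrow(\mathrm{ii})$, and states that the point of the present proof is to avoid it so as to obtain $(\mathrm{iii})$ and $(\mathrm{iv})$ as well.

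The genuine gap is exactly at the step you flag as ``Lemma-level bookkeeping'': the contraction dictionary in the direction needed for $(\mathrm{ii})\Rightarrow(\mathrm{iii})$. The triangle inequality does give that vanishing of all complex contractions forces vanishing of the real ones (the direction $(\mathrm{iii})\Rightarrow(\mathrm{i})$), but the converse fails as a linear-algebra statement: a real contraction of order $p$ of $g_k^{(r)}$ against $g_k^{(s)}$ expands as a \emph{sum} over all bidegrees $i+j=p$ of the complex contractions $f_k\otimes_{i,j}f_k$ (resp.\ $f_k\otimes_{i,j}h_k$), all of which produce kernels of the same bidegree $(2m-p,2n-p)$, so for $p\ge 2$ several distinct complex contractions collapse into a single real object and their individual vanishing cannot be recovered from the vanishing of the sum---the system is underdetermined and there is no orthogonality to separate the terms. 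What rescues the paper is precisely Lemma~\ref{lem 42}: by grouping the fourth moment in two ways ($E[(F\bar F)^2]$ and $E[F^2\overline{F^2}]$) via the complex product formula, each \emph{unsymmetrized} norm $\norm{f\otimes_{i,j}f}^2$ and $\norm{f\otimes_{i,j}h}^2$ appears as a \emph{separate non-negative summand} of $\E[\abs{F}^4]-2(\E[\abs{F}^2])^2-\abs{E[F^2]}^2$, so $(\mathrm{ii})$ kills each one individually. Your write-up would need to prove an identity of this strength; it does not follow from Peccati--Tudor plus triangle inequalities. A secondary, fixable gap: condition $(\mathrm{ii})$ is a single scalar limit, whereas Peccati--Tudor requires the two marginal fourth cumulants to vanish; to extract these from $E[\abs{F_k}^4]=E[X_k^4]+2E[X_k^2Y_k^2]+E[Y_k^4]$ you must also argue that the cross-term defect $E[X_k^2Y_k^2]-E[X_k^2]E[Y_k^2]-2E[X_kY_k]^2$ is non-negative for multiple integrals (it is, by the exchange identity for contractions), not merely that ``the joint fourth moments match.''
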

\begin{remark}
   \begin{itemize}
   \item[\textup{1)}]  If $m=n$ and $E[{F_k}^2]=0 $ or if $m\neq n$, then $\tensor{C}=\frac{\sigma^2}{2}\mathrm{Id}_2$. That is to say, the limit is a  complex Gaussian variable $\mathcal{CN}(0,\sigma^2)$ in this case. Theorem 7 of \cite{nourorti} is concerning multi-dimensional fourth moment theorems, but it requires $\tensor{C}=\frac{\sigma^2}{2}\mathrm{Id}$. Thus, our findings are partly more general.
  \item[\textup{2)}]   We give a different and simpler proof
  of the theorem.  The equivalence   $(\mathrm{i})\Leftrightarrow (\mathrm{ii})$ is shown by an indirect method in \cite{cl2} and by stein's method in \cite{camp}. In this paper, we show that $(\mathrm{i})\Rightarrow (\mathrm{ii})\Rightarrow (\mathrm{iii})\Rightarrow (\mathrm{iv})\Rightarrow (\mathrm{i})$ directly. We make use of {\rm (iii)} to show the asymptotic normality which is simpler than to use {\rm (iv)} as in previous work \cite{huNua}.
     \end{itemize}
\end{remark}

\section{Preliminaries: complex multiple Wiener-It\^o integrals}\label{subsec 2}

Denote by $(B_t,\,t\ge 0)$ a fBm of Hurst parameter $H\in (1/2, 3/4)$.  Then Gaussian isonormal process associated with $\FH$ is  given by Wiener integrals with respect to fBm for any deterministic kernel  $\in \FH$ (where $\FH$ is defined by \eqref{e.1.9}):
\begin{align}
  B(f)=\int_0^{\infty} f(s)\dif B_s,\quad \forall f\in \FH.
\end{align}

Let $\tilde{B}(\cdot)$ be an independent copy of the fractional Brownian motion $B(\cdot) $.  Following the same idea  of \cite{cl2}, we
define complex Gaussian isonormal process  and complex multiple Wiener-It\^o integrals with respect to fBm as follows.   For any $f=f_1+\mi f_2 $ with $f_1,\,f_2\in \FH$, define that
\begin{align}
     \FH_{\Cnum}&:=\{f_1+\mi f_2:\,f_1,f_2\in \FH \},\quad \langle f_1+\mi f_2\,,f_1+\mi f_2\rangle_{\FH_{\Cnum}}
     = \langle f_1 \,,f_1\rangle_{\FH}  + \langle   f_2\,, f_2\rangle_{\FH}\,,  \\
     B(f)&=B(f_1)+\mi B(f_2)\,,\quad
   \zeta(f) =\frac{1}{\sqrt2}[B(f)+\mi \tilde{B}(f)].
\end{align} Then $\zeta$ is called a complex isonormal Gaussian process over $ \FH_{\Cnum}$, which is a centered  complex Gaussian family satisfying
\begin{align*}
   E[ \zeta( {h})^2]=0,\quad E[\zeta( {g})\overline{\zeta( {h})}]=\innp{ {g}, {h}}_{\FH_{\Cnum}},\quad \forall  {g}, {h}\in \FH_{\Cnum}.
\end{align*}

From now on, without ambiguity, we still denote $\FH_{\Cnum}$ by $\FH$.

\begin{definition}[Complex multiple Wiener-It\^{o} integrals]\label{imn}
For a fixed $(p,q)$, suppose that $g\in \FH^{\odot p}\otimes \FH^{\odot q}$, we call ${I}_{p,q}(g)$ the complex multiple Wiener-It\^{o} integral of $g$ with respect to $\zeta$ (see \cite{cl2}).
And if $f\in \FH^{\otimes (p+ q)}$ then we define
\begin{equation}\label{ipq}
  I_{p,q}(f)=I_{p,q}(\tilde{f}),
\end{equation}
where $\tilde{f}$ is the symmetrization of $f$ in the sense of It\^{o} \cite{ito}:
\begin{equation}\label{ito-sense}
   \tilde{f}(t_1,\dots,t_{p+q})=\frac{1}{p!q!}\sum_{\pi}\sum_{\sigma}f(t_{\pi(1)},\dots,t_{\pi(p)},t_{\sigma(1)},\dots,t_{\sigma(q)}),
\end{equation}
where $\pi$ and $\sigma$ run over all permutations of $(1,\dots,p)$ and $(p+1,\dots, p+q)$ respectively.
\end{definition}

%
It is easy to see   that $\overline{{I}_{p,q}(f)}={I}_{q,p}(\bar f)$ and
\begin{align}
   E[I_{p,q}(f) \overline{I_{p',q'}(g)}]&=\delta_{p,p'}\delta_{q,q'} p!q!\innp{\tilde{f},\tilde{g}},\text{\quad (It\^{o}'s isometry)}
   \label{ito isometry}
\end{align}
where the Kronecker delta $\delta_{p,p'}$ is 1 when $p'$ is equal to $p$, and is 0 otherwise, and
$\innp{\cdot,\cdot}$ is the inner product on $\FH^{\otimes(p+ q)} $.  As a consequence,
\begin{align}
     E[\abs{I_{p,q}(f) }^2]&=p!q!\norm{\tilde{f}}^2\le p!q!\norm{f}^2,\text{\quad (It\^{o}'s isometry)}.\label{symmetric2}
\end{align}
The proof of Theorem~\ref{equilent cond} proceeds through several propositions and lemmas.
Firstly, we define the contraction of $(i,j)$ indices of two symmetric functions.
\begin{defn}\label{contra}
For two symmetric functions $f\in \FH^{\odot p_1}\otimes \FH^{\odot q_1},\,g\in \FH^{\odot p_2}\otimes \FH^{\odot q_2}$ and $i\le p_1\wedge q_2,\,j\le q_1\wedge p_2$, the contraction of $(i,j)$ indix  is defined as (see \cite{ch})
   \begin{align*}
    & f\otimes_{i,j} g (t_1,\dots,t_{p_1+p_2-i-j};s_1,\dots,s_{q_1+q_2-i-j})\\
    &=\int_{\Rnum_{+}^{2l}} \dif \vec{u}\dif \vec{u'} \phi(u_1,u_1')\dots \phi(u_i,u_i')f(t_1,\dots,t_{p_1-i},u_1,\dots,u_i; s_1\dots,s_{q_1-j},v_1\dots,v_j)\\
    & \times g(t_{p_1-i+1},\dots,t_{p-l},v_1',\dots,v_j'; s_{q_1-j+1}\dots,s_{q-l},u_1',\dots,u_i')\phi(v_1,v_1')\dots \phi(v_j,v_j')\dif \vec{v}\dif \vec{v'},
   \end{align*}where $l=i+j,\, p=p_1+p_2,\,q=q_1+q_2$, $\vec{u}=(u_1,\cdots, u_i),\,\vec{u'}=(u_1',\cdots, u_i')$ and $\vec{v}=(v_1,\cdots,v_j),\,\vec{v'}=(v_1',\cdots,v_j')$.
\end{defn}
By convention, $f\otimes_{0,0} g=f\otimes g$ denotes the tensor product of $f$ and $g$.
We write $f\tilde{\otimes}_{p,q} g$ for the symmetrization of $f\otimes_{p,q} g$. In what follows, we use the convention $f\otimes_{i,j} g=0$ if $i> p_1\wedge q_2$ or $j> q_1\wedge p_2$.

Our next result is a technical lemma.
\begin{lem}\label{lem 42}
Suppose that $F=I_{m,n}(f)$ with $f\in \FH^{\odot m}\otimes \FH^{\odot n}$ and that $\bar{F}=I_{n,m}(h)$. Then
\begin{align}
   & \E[\abs{F}^4]-2\big(\E[\abs{F}^2]\big)^2 -\abs{E[{F}^2]}^2\nonumber\\
      &=\sum_{0 <i+j<l}{m\choose i}^2{n\choose j}^2 (m!n!)^2  \norm{f\otimes_{i,j}f}^2_{\FH^{\otimes(2(l-i-j))}} +\sum_{r=1}^{l-1}((l-r)!)^2\norm{\psi_r}^2_{\FH^{\otimes(2(l-r))}}\label{f 4 q 1}\\
      &=\sum_{0<i+j<l}{m\choose i}{n\choose i}{n\choose j}{m\choose j} (m!n!)^2 \norm{f\otimes_{i,j}h}^2_{\FH^{\otimes(2(l-i+j))}}\label{f 4 q 2}\\
      &+\sum_{r= 1}^{l-1} (2m-r)!(2n-r)!  \norm{\varphi_r}^2_{\FH^{\otimes 2(l-r)}},\nonumber
\end{align}
where $l=m+n$ and
\begin{align}
   \psi_r&=\sum_{i+j=r} i!j! {m\choose i}^2{n\choose j}^2 f\tilde{\otimes}_{i,j} h,\label{bbb} \\
   \varphi_r&=\sum_{i+j=r} i!j! {m\choose i}{n\choose i}{n\choose j}{m\choose j} f\tilde{\otimes}_{i,j} f.
\end{align}
\end{lem}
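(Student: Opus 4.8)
The plan is to compute $\E[\abs{F}^4] = \E[F^2 \bar F^2]$ by reducing everything to the product formula for complex multiple Wiener-It\^o integrals. Write $F = I_{m,n}(f)$ and $\bar F = I_{n,m}(h)$ with $h(t;s) = \bar f(s;t)$ (suitably symmetrized). First I would apply the multiplication formula for complex multiple integrals (the analogue of the classical product formula, as in \cite{cl2,ch}) to expand $F\bar F$ as a sum $\sum_{i,j} c_{i,j}\, I_{m+n-i-j,\,m+n-i-j}(f\otimes_{i,j}h)$, where the contraction $\otimes_{i,j}$ pairs $i$ of the "holomorphic" variables of $f$ with $i$ of the "antiholomorphic" variables of $h$ and $j$ of the "antiholomorphic" variables of $f$ with $j$ "holomorphic" variables of $h$, and the combinatorial coefficient is $c_{i,j} = i! j! \binom{m}{i}\binom{n}{i}\binom{n}{j}\binom{m}{j}$. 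Then $\E[\abs{F}^4] = \E[(F\bar F)\overline{(F\bar F)}] = \E[\abs{F\bar F}^2]$, and by It\^o's isometry \eqref{ito isometry} this equals $\sum_{i,j} c_{i,j}^2\, ((m+n-i-j)!)^2 \norm{\widetilde{f\otimes_{i,j}h}}^2$. Isolating the three "extreme" terms $(i,j)\in\{(0,0),(m,n),(n,m)\}$ — which reproduce $2(\E\abs{F}^2)^2$ and $\abs{\E F^2}^2$ exactly — leaves a sum over $0<i+j<l$; this gives the first equality of \eqref{f 4 q 2}, and regrouping the symmetrized contractions by total order $r=i+j$ produces the $\varphi_r$ term, since $\widetilde{f\otimes_{i,j}h}$ appears with the right multiplicities. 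The same computation carried out by instead expanding $F^2$ and $\bar F^2$ separately (product formula applied to $I_{m,n}(f)\cdot I_{m,n}(f)$, contracting only holomorphic-with-holomorphic since both factors are of type $(m,n)$) and then pairing via isometry yields the representation in terms of $f\otimes_{i,j}f$, giving \eqref{f 4 q 1} with the $\psi_r$ term.

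The key computational steps, in order, are: (1) write down precisely the product formula for $I_{m,n}(f)I_{p,q}(g)$ in the complex setting, being careful about which variables contract (holomorphic of the first with antiholomorphic of the second, and vice versa) and the exact binomial coefficients; (2) apply it to $F\bar F$ and also to $F^2$ and $\bar F^2$; (3) apply It\^o's isometry \eqref{ito isometry} to the resulting chaos expansions, noting that only matching orders survive; (4) identify the three extreme contraction terms with $2(\E\abs F^2)^2 + \abs{\E F^2}^2$ using \eqref{symmetric2} and $\E[F^2] = $ the appropriate full contraction of $f$ with itself; (5) collect the remaining terms by $r = i+j$ and recognize the sums as $\norm{\psi_r}^2$ and $\norm{\varphi_r}^2$ respectively, using that $\norm{\sum_{i+j=r} a_{i,j} g_{i,j}}^2$ when the $g_{i,j}$ are symmetrizations sharing the same arguments expands into the stated form.

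I expect the main obstacle to be bookkeeping rather than any single hard idea: getting the combinatorial coefficients exactly right (both the $c_{i,j}$ in the product formula and the squares $c_{i,j}^2$ after isometry), and — more subtly — verifying that the passage to symmetrizations is consistent, i.e. that $\sum_{i+j=r}c_{i,j}\,\widetilde{f\otimes_{i,j}h}$ has squared norm equal to $\sum_{i+j=r}((l-r)!)^{?}\cdots$ matching the claimed $(2m-r)!(2n-r)!\norm{\varphi_r}^2$ structure, since symmetrization does not commute naively with the norm and one must track how the symmetrizer acts across the two groups of variables. A secondary technical point is justifying that all contractions appearing are well-defined elements of the relevant $\FH^{\otimes}$ spaces (finiteness of the $\phi$-integrals), which follows from $f \in \FH^{\odot m}\otimes\FH^{\odot n}$ and Cauchy–Schwarz in $\FH$, so this should be routine. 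Once the extreme terms are peeled off and shown to contribute exactly $2(\E\abs F^2)^2 + \abs{\E F^2}^2$, both displayed identities follow by the same mechanism applied to the two different expansions ($F\bar F$ versus $F\cdot F$), so the proof is essentially one computation done two ways.
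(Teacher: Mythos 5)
Your overall strategy (derive the identities from the product formula plus It\^o's isometry, computing $\E[\abs{F}^4]$ once as $\E[\abs{F\bar F}^2]$ and once as $\E[F^2\overline{F^2}]$) is in principle the right one --- it is essentially how the formula the paper actually relies on (Lemma 4.1 of \cite{ch}) is itself established, whereas the paper's own proof simply cites that lemma and then evaluates the extreme contractions $f\otimes_{m,n}h=\frac{1}{m!n!}\E[\abs{F}^2]$ and $f\otimes_{m,n}f=\frac{1}{(m!)^2}\E[F^2]\delta_{m,n}$ to peel off $2(\E[\abs{F}^2])^2+\abs{\E[F^2]}^2$. However, as written your computation does not produce the stated right-hand sides.

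The core gap is this: a single application of the product formula followed by the isometry can only ever produce \emph{symmetrized} contraction norms grouped by chaos order. Expanding $F\bar F=\sum_r I_{l-r,l-r}(\cdot)$ and applying \eqref{ito isometry} gives $\sum_{r}((l-r)!)^2\bigl\|\sum_{i+j=r}c_{i,j}\,f\tilde{\otimes}_{i,j}h\bigr\|^2$ (note the cross terms inside each fixed $r$; your intermediate formula $\sum_{i,j}c_{i,j}^2((l-i-j)!)^2\|\widetilde{f\otimes_{i,j}h}\|^2$ drops them, though you flag this later). But each of \eqref{f 4 q 1} and \eqref{f 4 q 2} contains, in addition to such a symmetrized family, a family of \emph{unsymmetrized} contraction norms ($\norm{f\otimes_{i,j}f}^2$ in \eqref{f 4 q 1}, $\norm{f\otimes_{i,j}h}^2$ in \eqref{f 4 q 2}). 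These do not come from the nonzero-order chaos terms at all: they come from expanding the $r=0$ term $(l!)^2\|\widetilde{f\otimes h}\|^2$ (resp.\ $(2m)!(2n)!\|\widetilde{f\otimes f}\|^2$) via the combinatorial identity relating the norm of a symmetrized tensor product to $\norm{f}^4$, $\abs{\innp{f,h}}^2$ and the unsymmetrized contractions. That step --- which is the actual content of the lemma, not bookkeeping --- is absent from your outline; without it the ``three extreme terms'' do not reproduce $2(\E[\abs{F}^2])^2+\abs{\E[F^2]}^2$ (indeed $(i,j)=(n,m)$ is not even an admissible contraction of $f$ with $h$ unless $m=n$). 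A secondary but real error is that the wires are crossed: the expansion of $F\bar F$ yields the $f\tilde{\otimes}_{i,j}h$ (i.e.\ $\psi_r$) family with weights $((l-r)!)^2$ together with, hidden in its $r=0$ term, the unsymmetrized $f\otimes_{i,j}f$ norms --- that is \eqref{f 4 q 1} --- while the expansion of $F^2$ yields the $f\tilde{\otimes}_{i,j}f$ (i.e.\ $\varphi_r$) family with weights $(2m-r)!(2n-r)!$ together with the unsymmetrized $f\otimes_{i,j}h$ norms --- that is \eqref{f 4 q 2}. Your proposal assigns each expansion to the opposite identity. The plan is repairable, but as it stands neither displayed identity follows from the steps you describe.
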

\begin{proof}
It follows from Lemma 4.1 of \cite{ch} that
\begin{eqnarray}
     && \E[\abs{F}^4]-\big(\E[\abs{F}^2]\big)^2\label{aaa}\\
      &=&\sum_{i+j>0}{m\choose i}^2{n\choose j}^2 (m!n!)^2  \norm{f\otimes_{i,j}f}^2_{\FH^{\otimes(2(l-i-j))}} +\sum_{r=1}^{l}\big((l-r)!\big)^2\norm{\psi_r}^2_{\FH^{\otimes(2(l-r))}}.\nonumber
   \end{eqnarray}
We calculate the term $\psi_{l}=f\otimes_{m,n}h $:
\begin{eqnarray}
 f\otimes_{m,n}h
  &=&\int_{\Rnum_{+}^{m+n}}d\vec{u}  d\vec{u'} d\vec{v} d\vec{v'}\phi(u_1,u_1')\dots\phi(u_m,u_m')\phi(v_1,v_1')\dots\phi(v_n,v_n')\nonumber \\
  && \times f(u_1,\dots,u_m;v_1,\dots, v_n)h(v_1',\dots, v_n';u_1',\dots,u_m') \nonumber\\
  &=&\norm{f}^2_{\FH^{\otimes(m+n)}}  = \frac{1}{m!n!}\E[\abs{F}^2],\label{fk}
\end{eqnarray}
 where the last equality is from Ito's isometry (\ref{symmetric2}).
Next, we calculate the term $f\otimes_{m,n}f $ in Eq.(\ref{aaa}) according to whether $m=n$ or not.
We consider the case  $m\neq n$ first.  Without loss generality we can take $m>n$. By Definition~\ref{contra} we have that if $i> n$ or $j>n$ then $   f\otimes_{i,j}f=0.$
Therefore, if $m\neq n$ then
\begin{equation}\label{fk2}
   f \otimes_{m,n}f = 0 = \E[F_k^2],
\end{equation} where the last equality is   It\^{o}'s isometry (\ref{ito isometry}).
If $m= n$, similarly to show (\ref{fk}), we obtain that
\begin{align}\label{mequ n}
f\otimes_{m,m}f=\innp{f,\,h}_{\FH^{\otimes(m+n)}}=\frac{1}{(m!)^2}\E[F^2].
\end{align}
Substituting (\ref{mequ n}) or (\ref{fk2}) according to whether $m=n$ or not, and (\ref{fk}), into (\ref{aaa}), we obtain (\ref{f 4 q 1}). Similarly, we can show   (\ref{f 4 q 2}).
\end{proof}
\begin{nott}
   Suppose that $f(\vec{t}^{m},\vec{s}^n)\in \FH^{\odot m}\otimes \FH^{\odot n}$. Denote
   \begin{equation}
      f_u(\vec{t}^{m-1},\vec{s}^n)=f(\vec{t}^{m-1},u,\vec{s}^n),\qquad f^v(\vec{t}^{m},\vec{s}^{n-1})=f(\vec{t}^{m},\vec{s}^{n-1},v).
   \end{equation}
   Clearly, $f_u(\vec{t}^{m-1},\vec{s}^n)\in \FH^{\odot m-1}\otimes \FH^{\odot n}$ and $f^v(\vec{t}^{m},\vec{s}^{n-1})\in \FH^{\odot m}\otimes \FH^{\odot n-1}$.
\end{nott}

\begin{defn}[Complex Malliavin Derivatives]\label{dfn32}
Let $\mathcal{S}$ denote the set of all random variables of the form
\begin{equation}\label{definition}
  f\big(\zeta^H(\varphi_1),\cdots,\zeta^H(\varphi_m)\big),
\end{equation}
where $f\in C_{\uparrow}^\infty(\Cnum^m)$ and $\varphi_i\in\FH,i=1,2,\cdots,m$. Let $F\in \mathcal{S}$ be given by (\ref{definition}). The complex Malliavin derivative of $F$ is the element of $L^2(\Omega, \mathfrak{H} )$ defined by:
 \begin{align}
    D  F     & =\sum_{i=1}^m \partial_{i } f(\zeta^H(\varphi_1),\dots,\zeta^H(\varphi_m))\varphi_{i},\\
    \bar{D}  F     & =\sum_{i=1}^m \bar{\partial}_{i } f(\zeta^H(\varphi_1),\dots,\zeta^H(\varphi_m))\bar{\varphi}_{i},
  \end{align}
  where $\partial_j f= \frac{\partial}{\partial z_j}f(z_1,\dots,z_m) ,\quad \bar{\partial}_j f=\frac{\partial}{\partial \bar{z}_j}f(z_1,\dots,z_m)$ are the Wirtinger derivatives \cite{camp}.
\end{defn}
\begin{prop}\label{etaxinu}
Suppose that $l=m+n$ and
\begin{align}
 \eta_r&=\sum_{i+j=r}i{m\choose i}^2{n\choose j}^2i!j!f\tilde{\otimes}_{i,j}h,\label{etak}\\
 \xi_r& =\sum_{i+j=r}j {m\choose i}^2{n\choose j}^2i!j!h\tilde{\otimes}_{j, i}f,\label{xik}\\
 \nu_r&=\sum_{i+j=r} i {m\choose i}{n \choose i}{n\choose j}{m\choose j}i!j!f\tilde{\otimes}_{i, j}f,\label{nuk}
\end{align} then we have that
\begin{align}
   \mathrm{Var}(\norm{D I_{m,n}(f)}_{\FH}^2)&= \sum_{r=1}^{l-1} [(l-r)!]^2 \norm{\eta_r}^2_{\FH^{\otimes (2(l-r) )}},\label{var1}\\
   \mathrm{Var}(\norm{\bar{D} I_{m,n}(f)}_{\FH}^2)&= \sum_{r=1}^{l-1} [(l-r)!]^2 \norm{\xi_r}^2_{\FH^{\otimes (2(l-r) )}},\label{var2}\\
   \mathrm{Var}(\innp{D I_{m,n}(f),\, {D} \overline{I_{m,n}(f)}}_{\FH} )&=\sum_{r=1}^{l-1} (2m-r)! (2n-r)!  \norm{\nu_r}^2_{\FH^{\otimes (2(l-r) )}}\label{var3}.
\end{align}
\end{prop}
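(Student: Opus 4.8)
The plan is to expand each of the three quantities into a \emph{finite} sum of complex multiple Wiener--It\^o integrals by means of the product formula for the $I_{p,q}$'s, and then to read off the variances from It\^o's isometry, the point being that the mean is carried entirely by the single zeroth-order summand while all strictly positive-order summands are mutually orthogonal in $L^2(\Omega)$.

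First I would use the representation of the complex Malliavin derivatives of a multiple integral: for $F=I_{m,n}(f)$ and $\bar F=I_{n,m}(h)$,
\[
D_uF=m\,I_{m-1,n}(f_u),\qquad \bar D_uF=n\,I_{m,n-1}(f^u),\qquad D_u\bar F=n\,I_{n-1,m}(h_u),
\]
with $f_u,f^u,h_u$ as in the Notation above (each again symmetric in its remaining arguments). Combining this with $\overline{I_{p,q}(g)}=I_{q,p}(\bar g)$ and the elementary kernel identities $\overline{f_{u'}}=h^{u'}$, $\overline{f^{u'}}=h_{u'}$ (immediate from $h(\vec t^n,\vec s^m)=\bar f(\vec s^m,\vec t^n)$, together with their conjugates), one rewrites
\[
\norm{D F}_\FH^2=m^2\iint I_{m-1,n}(f_u)\,I_{n,m-1}(h^{u'})\,\phi(u,u')\,\dif u\,\dif u',
\]
and likewise $\norm{\bar D F}_\FH^2=n^2\iint I_{m,n-1}(f^u)\,I_{n-1,m}(h_{u'})\,\phi\,\dif u\,\dif u'$ and $\innp{D F,\,D\bar F}_\FH=mn\iint I_{m-1,n}(f_u)\,I_{m,n-1}(f^{u'})\,\phi\,\dif u\,\dif u'$.

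Next I would apply the complex product formula to each integrand --- the analogue of the classical one, in which a first-type argument of one factor is contracted against a second-type argument of the other; this is the formula underlying Lemma~4.1 of \cite{ch} already used in the proof of Lemma~\ref{lem 42}. Each product becomes a linear combination, with coefficients $a!\,b!\times(\text{products of binomial coefficients})$, of integrals of $f_u\otimes_{a,b}h^{u'}$ (and so on). The key elementary observation is that, because $f$ (hence $h$) is symmetric within its first-type and within its second-type variables, integrating such a kernel against $\phi(u,u')$ over $(u,u')$ produces exactly one more contraction and \emph{no} spurious symmetrization constant:
\[
\iint \big(f_u\otimes_{a,b}h^{u'}\big)\,\phi(u,u')\,\dif u\,\dif u'=f\otimes_{a+1,b}h,\qquad
\iint \big(f_u\otimes_{a,b}f^{u'}\big)\,\phi(u,u')\,\dif u\,\dif u'=f\otimes_{a+1,b}f,
\]
the index $a$ being the one incremented because the fixed variable $u$ occupies a first-type slot of $f$ and $u'$ a second-type slot of $h$ (resp. of the second copy of $f$). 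Reindexing $i=a+1\ge1$, $j=b$, and using $m\binom{m-1}{i-1}=i\binom mi$, $n\binom{n-1}{i-1}=i\binom ni$ to absorb the prefactors, I would arrive at
\[
\norm{D F}_\FH^2=\sum_{i\ge1,\,j\ge0}\! i\,i!\,j!\,{\textstyle\binom mi^2\binom nj^2}\,I_{l-i-j,\,l-i-j}(f\otimes_{i,j}h),
\]
with $\norm{\bar D F}_\FH^2$ the same but with weight $j$ in place of $i$, and $\innp{D F,D\bar F}_\FH=\sum_{i\ge1,j\ge0} i\,i!\,j!\,\binom mi\binom ni\binom nj\binom mj\,I_{2m-i-j,\,2n-i-j}(f\otimes_{i,j}f)$. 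Grouping by $r=i+j$ and replacing each kernel by its symmetrization (legitimate by \eqref{ipq}, and using $f\tilde{\otimes}_{i,j}h=h\tilde{\otimes}_{j,i}f$), the coefficient of the $r$-th integral is exactly $\eta_r$, $\xi_r$, $\nu_r$ of \eqref{etak}--\eqref{nuk}.

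Finally, the summand $r=l$, i.e. $(i,j)=(m,n)$, is of order $(0,0)$ and hence deterministic; by \eqref{fk}, \eqref{fk2}, \eqref{mequ n} it equals $\E[\norm{D F}_\FH^2]$ (resp. $\E[\norm{\bar D F}_\FH^2]$, $\E[\innp{D F,D\bar F}_\FH]$). Subtracting it and invoking the orthogonality of multiple integrals of distinct orders together with $\E[\abs{I_{p,q}(g)}^2]=p!q!\norm{\tilde g}^2$ (both from \eqref{ito isometry}, \eqref{symmetric2}), and noting that $\eta_r,\xi_r,\nu_r$ are already symmetric, gives \eqref{var1}--\eqref{var3}. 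The hard part is the bookkeeping in the previous step: checking that the $\phi(u,u')$-contraction introduces no extra combinatorial factor, and that the binomials coming out of the product formula collapse, after the shift $i=a+1$, into exactly the weights $i$, $j$, $i$ that distinguish $\eta_r,\xi_r,\nu_r$ from the $\psi_r,\varphi_r$ of Lemma~\ref{lem 42}; the identity $\eta_r+\xi_r=r\,\psi_r$ is a convenient consistency check.
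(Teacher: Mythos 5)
Your proposal is correct and follows essentially the same route as the paper: expand $\norm{DF}_\FH^2$ (and its two companions) via the complex product formula, absorb the $\phi(u,u')$-integration into one extra contraction, identify the $(i,j)=(m,n)$ term as the deterministic mean, and read off the variance from orthogonality and It\^o's isometry, with the same reindexing $i=a+1$ and the identity $m\binom{m-1}{i-1}=i\binom{m}{i}$ producing the weights in $\eta_r,\xi_r,\nu_r$. The only (cosmetic) difference is that you push the $\phi(u,u')$-integration through to obtain a genuine chaos decomposition of $\norm{DF}_\FH^2$ before taking variances, whereas the paper keeps that integral outside and computes $\E\big[\norm{DF}_\FH^4\big]$ directly by Fubini; the bookkeeping is identical.
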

\begin{proof}We need only to show (\ref{var1}) since the other two are similar.
Denote $l'=m+n-1$.\\
{\it Step 1: Using product formula.}
By Theorem 12(D) of \cite{ito} and the product formula of complex Wiener-It\^{o} multiple integrals (Theorem 3.2 of \cite{ch}), 
we have that
\begin{align*}
   & \frac{1}{m^2}\norm{D_{\cdot}(I_{m,n}(f))}_{\FH}^2\\
   &=\norm{I_{m-1,n}\big(f_u(\vec{t}^{m-1},\vec{s}^n)\big)}^2_{\FH}\\
   &=\int_{[0,\infty)^2}\dif u\dif v \phi(u,v) I_{m-1,n}\big(f_u(\vec{t}^{m-1},\vec{s}^n)\big)\overline{I_{m-1,n}\big(f_v(\vec{t}^{m-1},\vec{s}^n)\big)}\\
   &=\sum_{i=0}^{m-1}\sum^{n}_{j=0}\,{m-1\choose i}^2{n\choose j}^2i!j!\,\int_{[0,\infty)^2}\dif u\dif v \phi(u,v) I_{l'-i-j,l'-i-j}\big(f_u(\vec{t}^{m-1},\vec{s}^n)\otimes_{i,j}h^v(\vec{t}^n,\vec{s}^{m-1})\big)
\end{align*}
where $h^v(\vec{t}^n,\vec{s}^{m-1})=\bar{f}_v(\vec{s}^{m-1},\vec{t}^n)=\bar{f}(\vec{s}^{m-1},v,\vec{t}^n)$ and
\begin{align}\label{fuhv}
   & \Big( f_u(\vec{t}^{m-1},\vec{s}^n)\otimes_{i,j}h^v(\vec{t}^n,\vec{s}^{m-1})\Big)(\bar{t}^{l'-i-j},u,\bar{s}^{l'-i-j},v)\\
   &=\int_{[0,\infty)^{i+j}}\dif \vec{x}^i\dif\vec{x'}^i \phi(x_1,x_1')\dots \phi(x_i,x_i')f_u(\vec{t}^{m-1-i},\vec{x}^i,\vec{s}^{n-j},\vec{y}^j )\nonumber \\
  &\times \bar{f}_v(s_{n-j+1},\dots,s_{l'-j-i},\vec{x'}^i, t_{m-i},\dots,t_{l'-i-j},\vec{y'}^j)  \phi(y_1,y_1')\dots \phi(y_j,y_j')\dif \vec{y}^j\dif\vec{y'}^j.\nonumber
\end{align}
Then we obtain that
\begin{align}\label{1m2}
   \frac{1}{m^2}\norm{D_{\cdot}(I_{m,n}(f))}_{\FH}^2=\sum_{r=0}^{l'}\int_{[0,\infty)^2}\dif u\dif v \phi(u,v) I_{l'-r,l'-r}(g_r(u,v)).
\end{align} where
\begin{equation}
   g_k(u,v)=\sum_{i+j=k} {m-1\choose i}^2{n\choose j}^2i!j!f_u(\vec{t}^{m-1},\vec{s}^n)\tilde{\otimes}_{i,j}h^v(\vec{t}^n,\vec{s}^{m-1}).\label{guv}
\end{equation}
Taking expectation to Eq.(\ref{1m2}), we have that
\begin{align}
  \frac{1}{m^2}\E[\norm{D_{\cdot}(I_{m,n}(f))}_{\FH}^2]&= \int_{[0,\infty)^2}\dif u\dif v \phi(u,v) g_{l'}(u,v)\nonumber\\
  &=(m-1)!n!\int_{[0,\infty)^2}\dif u\dif v \phi(u,v)f_u\otimes_{m-1,n}h^v\nonumber\\
  &=(m-1)!n! \norm{f}_{\FH^{\otimes (m+n)}}^{2}.\label{norm dd}
\end{align}
{\it Step 2: Calculating variance.} It follows from Fubini's theorem and It\^{o}'s isometry that we have:
\begin{align}\label{d equav}
   &\frac{1}{m^4}\mathbf{E}[\norm{D_{\cdot}\big( I_{m,n}(f)\big)}_{\FH}^4]\\&=\sum_{r=0}^{l'} \int_{[0,\infty)^4}\dif u\dif v\dif u'\dif v'\phi(u,v)\phi(u',v')[(l'-r)!]^2 \innp{{g_r(u,v)},g_r(u',v')}_{\FH^{\otimes 2(l'-r)}}.\nonumber
\end{align}
It is easy to check that
\begin{align*}
& \int_{[0,\infty)^4}\dif u\dif v\dif u'\dif v'\phi(u,v)\phi(u',v') \innp{f_u\tilde{\otimes}_{i,j} h^v,f_{u'}\tilde{\otimes}_{i,j} h^{v'}}_{\FH^{\otimes 2 (l'-k)}}\\
   &=\innp{f \tilde{\otimes}_{i+1,j} h ,f \tilde{\otimes}_{i+1,j} h }_{\FH^{\otimes 2 (l'-k)}} =\norm{f\tilde{\otimes}_{i+1,j} h}^2_{\FH^{\otimes 2(l'-k)}},
\end{align*}
which implies that
\begin{align*}
   \frac{1}{m^4}\mathbf{E}[\norm{D_{\cdot}\big( I_{m,n}(f)\big)}_{\FH}^4]=\sum_{r=0}^{l'}[(l'-r)!]^2  \innp{G_r,\,G_r}_{\FH^{\otimes 2(l'-r)} },
\end{align*}where
\begin{align}
    G_k =\sum_{i+j=k} {m-1\choose i}^2{n\choose j}^2i!j!f(\vec{t}^{m},\vec{s}^n)\tilde{\otimes}_{i+1,j}h(\vec{t}^n,\vec{s}^{m}).\label{Gg}
\end{align}
Especially, for the term with $k=l'$, we have that 
\begin{align*}
    \abs{G_{l'}}^2&=[(m-1)!n!]^2\abs{f\otimes_{m,n} h}^2\\
    &=[(m-1)!n!]^2\norm{f}_{\FH^{\otimes (m+n)}}^4
    =\Big(\frac{1}{m^2}\mathbf{E}[\norm{D_{\cdot}(I_{m,n}(f))}_{\FH}^2]\Big)^2.
\end{align*}
Substituting the above equality displayed into (\ref{d equav}), we have that
\begin{align*}
   \mathrm{Var}(\norm{D I_{m,n}(f)}_{\FH}^2)&=m^4\sum_{r'=0}^{l'-1}[(l'-r')!]^2  \innp{G_{r'},\,G_{r'}}_{\FH^{\otimes 2(l'-r')} }\\
   &=   \sum_{r=1}^{l-1} [(l-r)!]^2 \norm{\eta_r}^2_{\FH^{\otimes (2(l-r) )}} \qquad (\text{ let\,\, } l=l'+1,\,r=r'+1),
\end{align*} where
$   \eta_r=m^2 G_{r'}$, which implies the desired expressions (\ref{etak})-(\ref{var1}).

\end{proof}


\noindent{\it Proof of Theorem~\ref{equilent cond}.\,} Since (i)$\Rightarrow $(ii) is well known, we need only to show the following implications:
\begin{align*}
   (\mathrm{ii})\Rightarrow (\mathrm{iii})\Rightarrow (\mathrm{iv})\Rightarrow (\mathrm{i}).
\end{align*}

[(ii)$\Rightarrow$ (iii) ] Condition (ii) implies that as $k\to \infty$,
\begin{align}\label{f4 moment}
  \E[\abs{F_k}^4]-2\big(\E[\abs{F_k}^2]\big)^2 -\abs{E[{F_k}^2]}^2\to 0,
\end{align} which implies that Condition (iii) holds by (\ref{f 4 q 1})-(\ref{f 4 q 2}), (see Lemma~\ref{lem 42}).

[(iii)$\Rightarrow $(iv)] Suppose that Condition (iii) holds. The inequality (5.2) of \cite{ito} implies that as $k\to \infty$,
\begin{align*}
   \norm{f_k\tilde{\otimes}_{i,j} f_k}_{\FH^{\otimes ( 2(l-i-j))}}\to 0,\qquad
   \norm{f_k\tilde{\otimes}_{i,j} h_k}_{\FH^{\otimes ( 2(l-i-j))}}=\norm{h_k\tilde{\otimes}_{i,j} f_k}_{\FH^{\otimes ( 2(l-i-j))}}\to 0.
\end{align*}
Therefore, it follows from Minkowski's inequality and Proposition~\ref{etaxinu} that as $k\to \infty$,
\begin{align*}
 \eta_r^{k} \to 0 ,\qquad \xi_r^{k}\to 0,\qquad \nu_r^{k}\to 0,\qquad r=1,\dots,l-1,
\end{align*}where $\eta_r^{k},\,\xi_r^{k},\,\nu_r^{k} $ are given as Equations (\ref{etak})-(\ref{nuk}).
By (\ref{var1})-(\ref{var3}), we obtain that Condition (iv) holds.

[(iv)$\Rightarrow$(i)] We follow the idea of \cite[Theorem 4]{nourorti}, i.e. we combine Malliavin calculus and partial differential equations. Let
\begin{align*}
   \varphi_k(z)=\E\big[e^{\mi(\bar{z}F_k+z\bar{F}_k)/2} \big].
\end{align*} Then we have that
\begin{equation}\label{par varphik}
    \left\{
      \begin{array}{l}
    \frac{\partial \varphi_k}{\partial z}=\frac{\mi}{2}\E\big[\bar{F}_k\times e^{\mi(\bar{z}F_k+z\bar{F}_k)/2} \big],    \\
    \frac{\partial \varphi_k}{\partial \bar{z}}=\frac{\mi}{2}\E\big[ F_k\times e^{\mi(\bar{z}F_k+z\bar{F}_k)/2} \big].
      \end{array}
\right.
\end{equation}

By the assumption $\E[\abs{F_k}^2]\to \sigma^2$, $\set{F_k}$ are tight. Now suppose that the subsequence $\set{F_{n_k}}$ converges to $G$ in law. Without ambiguity, we still denote $\set{F_{n_k}}$ by $\set{F_{k}}$. By the hypercontractivity inequality of complex multiple Wiener-It\^{o} integrals \cite{ch},  $\set{\abs{F_k}^r}$ is uniformly integrable and thus $\E[\abs{G}^r]=\lim_{k\to \infty}\E[\abs{F_k}^r]$ for all $r\ge 1$ \cite[Theorem 5.4]{bll}. Therefore, the characteristic function $\varphi(z)=\E[e^{\frac{\mi}{2}(\bar{z}G+z\bar{G})}]$ has continuous partial derivatives of any order.

It is not difficult to see that
\begin{align*}
   \E\big[\bar{F}_k\times e^{\mi(\bar{z}F_k+z\bar{F}_k)/2} \big]&=\frac{1}{m}\E\big[\overline{(\delta D) {F}_k}\times e^{\mi(\bar{z}F_k+z\bar{F}_k)/2} \big]\\
   &=\frac{1}{m}\E\big[\innp{ D (e^{\mi(\bar{z}F_k+z\bar{F}_k)/2}),\, D {F}_k}_{\FH} \big]\\
   &=\frac{1}{m}\E\big[\innp{e^{\mi(\bar{z}F_k+z\bar{F}_k)/2}(\bar{z}DF_k+z D\bar{F}_k),\, D{F}_k}_{\FH} \big]
\end{align*}
Clearly, for any $z\in \Cnum$, $e^{\mi(\bar{z}F_k+z\bar{F}_k)/2}\to e^{\mi(\bar{z}G+z\bar{G})/2} $ in $L^2(\Omega)$. Thus, Condition (iv) implies that as $k\to \infty$,
\begin{align*}
   &\E\big[\innp{e^{\mi(\bar{z}F_k+z\bar{F}_k)/2}(\bar{z}DF_k+zD\bar{F}_k),\, D{F}_k}_{\FH} \big]\\
   &\to (\bar{z}\lim_{k\to\infty}\E[\norm{D F_k}^2_{\FH}]+z\lim_{k\to\infty}\E[\innp{D\bar{F}_k\,D{F}_k}_{\FH} ])\varphi(z),\quad \forall z\in \Cnum,
\end{align*}
since the scalar product in the Hilbert space $L^2(\Omega)$ depends continuously on its factors.
It follows from (\ref{norm dd}) and
\begin{align*}
     \frac{1}{mn} \E\big[\innp{D I_{m,n}(f),\, {D} \overline{I_{m,n}(f)}}_{\FH}\big]=\delta_{m,n}m!(m-1)!f\otimes_{m,m}f,
\end{align*}
that
\begin{align*}
   \lim_{k\to\infty}\E[\norm{D F_k}^2_{\FH}]&=m\lim_{k\to\infty}\E[\norm{F_k}^2_{\FH}]=m\sigma^2,\\
    \lim_{k\to\infty}\E[\innp{D\bar{F}_k\,D{F}_k}_{\FH}]&=m\delta_{m,n}\lim_{k\to\infty}\overline{\E[F_k^2]}\\
    &=m(c-\mi b)\delta_{m,n} .
\end{align*}
Therefore, it follows from (\ref{par varphik}) that for any $z\in \Cnum$,
\begin{align}
   \frac{\partial \varphi}{\partial z}= [\bar{z} \sigma^2+z \cdot(c-\mi b)\delta_{m,n} ]\varphi(z).
\end{align}
In the same way,
\begin{align}
   \frac{\partial \varphi}{\partial\bar{ z}}= [\bar{z}\cdot(c+\mi b)\delta_{m,n}+z \sigma^2 ]\varphi(z).
\end{align} Clearly, $\varphi(0)=1$. Therefore, $G$ is a bivariate normal distribution with covariance matrix $\tensor{C}=\frac{1}{2}\begin{bmatrix} \sigma^2+c & b\\ b & \sigma^2-c  \end{bmatrix}$. Prokhorov's theorem implies that
$\set{F_k}$ converges to a bivariate normal distribution with the desired covariance matrix $\tensor{C}$.
{\hfill\large{$\Box$}}\\
\section{Asymptotic  consistency and normality }
We need several   propositions and lemmas before the
  proof of Theorem~\ref{strong law}.
The following lemma's proof is easy.
\begin{lem}\label{deri}
   For any $H\in (\frac12,1)$, we have that
\begin{equation}
  \int_{[0,\infty)^2}e^{-\gamma u_1-\bar{\gamma} u_2}\abs{u_1-u_2}^{2H-2}\dif u_1\dif u_2=d\,,
\end{equation}
where $d$ is defined by \eqref{d d}.
\end{lem}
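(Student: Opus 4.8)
The plan is to split the square $[0,\infty)^2$ along its diagonal into the two triangles $\{u_1>u_2\}$ and $\{u_1<u_2\}$ (the diagonal is a null set), and to evaluate each triangle by a shear change of variables that factorizes it into a product of one-dimensional integrals.

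First I would record the absolute integrability needed to apply Fubini: since $\abs{e^{-\gamma u_1-\bar{\gamma}u_2}}=e^{-\lambda(u_1+u_2)}$ and $2H-2\in(-1,0)$, the integrand is dominated by $e^{-\lambda(u_1+u_2)}\abs{u_1-u_2}^{2H-2}$; the diagonal singularity has exponent strictly greater than $-1$ and the exponential factor gives decay at infinity, so this dominating function is integrable over $[0,\infty)^2$ (indeed the same shear computation below, with $\gamma,\bar\gamma$ replaced by $\lambda$, shows it equals a finite constant).

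Next, on $\{u_1>u_2\}$ I would substitute $u_1=u_2+v$ with $v>0$, so that $\gamma u_1+\bar{\gamma}u_2=(\gamma+\bar{\gamma})u_2+\gamma v=2\lambda u_2+\gamma v$ and the integral over this triangle factors as
\begin{equation*}
\Bigl(\int_0^\infty e^{-2\lambda u_2}\,\dif u_2\Bigr)\Bigl(\int_0^\infty e^{-\gamma v}v^{2H-2}\,\dif v\Bigr)=\frac{1}{2\lambda}\,\Gamma(2H-1)\,\gamma^{-(2H-1)},
\end{equation*}
where I use the identity $\int_0^\infty e^{-av}v^{s-1}\,\dif v=\Gamma(s)a^{-s}$, valid for $\RE a>0$ and $\RE s>0$ with the principal branch of the power; here $a=\gamma$ with $\RE\gamma=\lambda>0$ and $s=2H-1>0$ since $H>\tfrac12$. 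The symmetric substitution $u_2=u_1+v$ on $\{u_1<u_2\}$ (equivalently, interchanging the roles of $\gamma$ and $\bar\gamma$) contributes $\frac{1}{2\lambda}\Gamma(2H-1)\bar{\gamma}^{-(2H-1)}$. Adding the two pieces yields exactly $d$ as defined in \eqref{d d}.

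I do not expect any genuine obstacle: the proof is a two-line computation once Fubini is justified. The only points deserving a sentence are the absolute-integrability check above and the fact that the Gamma integral retains its form for the complex parameter $\gamma$ with the principal branch; the latter follows either by analytic continuation in the parameter from the real-parameter case or, more directly, by rotating the contour of integration using $\RE\gamma>0$.
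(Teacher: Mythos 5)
Your proof is correct; the paper omits the argument entirely (calling it ``easy''), and your shear substitution $u_1=u_2+v$ combined with the Gamma identity $\int_0^\infty e^{-\beta x}x^{\alpha-1}\,\dif x=\Gamma(\alpha)\beta^{-\alpha}$ for $\RE\beta>0$ is exactly the computation the authors intend, being the same device they use explicitly in the proof of Lemma~\ref{lem 5_2 ht}. Your attention to the Fubini justification and to the principal branch for the complex parameter $\gamma$ is appropriate and the resulting sum matches the definition of $d$ in \eqref{d d}.
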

\begin{prop}\label{prop 5 1}
Let $Z$ be the solution to \eqref{cp}.   As $T\to \infty$, we have that
   \begin{equation}
     \frac{1}{T}\int_0^T \abs{Z_t}^2\dif t\to  {a}\alpha_H{d},\qquad a.s.
   \end{equation}
\end{prop}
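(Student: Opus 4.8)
The plan is to split $Z_t$ into its deterministic transient part and a stochastic integral, reduce the almost sure convergence to an $L^2$-estimate for a single object living in the $(1,1)$-th complex chaos, and then upgrade convergence along a sparse subsequence to convergence for all $T$ by exploiting the monotonicity of $T\mapsto\int_0^T\abs{Z_t}^2\dif t$. Variation of constants in \eqref{cp} gives $Z_t=Z_0e^{-\gamma t}+R_t$ with $R_t=\sqrt a\int_0^t e^{-\gamma(t-s)}\dif\zeta_s=I_{1,0}(g_t)$, where $g_t(s)=\sqrt a\,e^{-\gamma(t-s)}\one_{[0,t]}(s)\in\FH$. Expanding $\abs{Z_t}^2=\abs{Z_0}^2e^{-2\lambda t}+2\RE(\bar Z_0 e^{-\bar\gamma t}R_t)+\abs{R_t}^2$, the first two summands are bounded by $C e^{-\lambda t}(1+\abs{R_t})$, and since $\sup_{t\ge0}\E[\abs{R_t}^2]<\infty$ one has $\int_0^\infty e^{-\lambda t}\abs{R_t}\dif t<\infty$ a.s.; hence their time averages vanish a.s.\ and it suffices to show $\frac1T\int_0^T\abs{R_t}^2\dif t\to a\alpha_H d$ a.s.

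Next, by the product formula for complex multiple Wiener--It\^o integrals (Theorem 3.2 of \cite{ch}) one has $\abs{R_t}^2=R_t\overline{R_t}=I_{1,1}(g_t\otimes\bar g_t)+\norm{g_t}^2_{\FH}$, and $\norm{g_t}^2_\FH=\E[\abs{R_t}^2]$. The substitution $u=t-u'$, $v=t-v'$ turns $\norm{g_t}^2_\FH=a\alpha_H\int_0^t\int_0^t e^{-\gamma(t-u)-\bar\gamma(t-v)}\abs{u-v}^{2H-2}\dif u\dif v$ into $a\alpha_H\int_0^t\int_0^t e^{-\gamma u'-\bar\gamma v'}\abs{u'-v'}^{2H-2}\dif u'\dif v'$, which by dominated convergence and Lemma~\ref{deri} tends to $a\alpha_H d$; hence also $\frac1T\int_0^T\E[\abs{R_t}^2]\dif t\to a\alpha_H d$. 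So the proposition reduces to showing that $X_T:=\frac1T\int_0^T I_{1,1}(g_t\otimes\bar g_t)\dif t=I_{1,1}(F_T)$, with $F_T:=\frac1T\int_0^T g_t\otimes\bar g_t\,\dif t$, converges to $0$ almost surely.

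For the $L^2$-estimate, note $I_{1,1}(g_t\otimes\bar g_t)=\abs{R_t}^2-\E[\abs{R_t}^2]$ is real and $(R_s,R_t)$ is jointly circular complex Gaussian with $\E[R_sR_t]=0$, so It\^o's isometry together with the Gaussian product formula give $\E[\abs{X_T}^2]=\frac1{T^2}\int_0^T\int_0^T\abs{\langle g_s,g_t\rangle_\FH}^2\dif s\dif t$. The crucial point is the uniform decay bound $\abs{\langle g_s,g_t\rangle_\FH}=\abs{\E[R_s\overline{R_t}]}\le C(1+\abs{s-t})^{2H-2}$ for all $s,t\ge0$, obtained by the same change of variables and by estimating $\abs{u-v}^{2H-2}=\abs{(s-t)+v'-u'}^{2H-2}$ while using the exponential weights to localize $u',v'$. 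Since $H<3/4$ gives $4H-4<-1$, writing $r=s-t$ yields $\int_0^T\int_0^T(1+\abs{s-t})^{4H-4}\dif s\dif t\le T\int_{\R}(1+\abs{r})^{4H-4}\dif r=O(T)$, whence $\E[\abs{X_T}^2]=O(1/T)$. I expect this covariance-decay estimate to be the main obstacle; the rest is routine.

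Finally, along $T_n=n^2$ one gets $\sum_n\E[\abs{X_{n^2}}^2]<\infty$, so Chebyshev's inequality and Borel--Cantelli give $X_{n^2}\to0$ a.s., hence $\frac1{n^2}\int_0^{n^2}\abs{Z_t}^2\dif t\to a\alpha_H d$ a.s.\ (alternatively, the hypercontractivity of complex multiple integrals \cite{ch} gives $\E[\abs{X_T}^4]\le C(\E[\abs{X_T}^2])^2=O(T^{-2})$, allowing $T_n=n$). Because $T\mapsto\int_0^T\abs{Z_t}^2\dif t$ is nondecreasing and $T_{n+1}/T_n\to1$, for $T_n\le T\le T_{n+1}$ one sandwiches $\frac1T\int_0^T\abs{Z_t}^2\dif t$ between $\frac{T_n}{T_{n+1}}\cdot\frac1{T_n}\int_0^{T_n}\abs{Z_t}^2\dif t$ and $\frac{T_{n+1}}{T_n}\cdot\frac1{T_{n+1}}\int_0^{T_{n+1}}\abs{Z_t}^2\dif t$, both of which converge to $a\alpha_H d$; this gives $\lim_{T\to\infty}\frac1T\int_0^T\abs{Z_t}^2\dif t=a\alpha_H d$ almost surely, as claimed.
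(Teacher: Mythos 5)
Your argument is correct, but it follows a genuinely different route from the paper. The paper's proof introduces the two-sided (stationary) solution $Y_t=\sqrt a\int_{-\infty}^te^{-\gamma(t-s)}\dif\zeta_s$, observes that $Z_t=Y_t+e^{-\gamma t}(Z_0-Y_0)$, checks that the covariance $E[Y_s\bar Y_0]$ decays like $|s|^{2H-2}$, and then invokes the ergodic theorem for stationary Gaussian processes (via Dym--McKean) to get $\frac1T\int_0^T|Y_t|^2\dif t\to E[|Y_0|^2]=a\alpha_Hd$ a.s. You instead stay with the one-sided solution $R_t$, use the product formula to write $|R_t|^2-\E|R_t|^2=I_{1,1}(g_t\otimes\bar g_t)$, prove the quantitative variance bound $\E[|X_T|^2]=\frac1{T^2}\int_0^T\int_0^T|\langle g_s,g_t\rangle_\FH|^2\dif s\dif t=O(1/T)$ from the same covariance-decay estimate, and finish with Borel--Cantelli along $T_n=n^2$ plus the monotone-interpolation sandwich. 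Each ingredient you use checks out: the Wick/isometry identity $\mathrm{Cov}(|R_s|^2,|R_t|^2)=|\E[R_s\bar R_t]|^2$ is valid because $\E[R_sR_t]=0$, the bound $|\langle g_s,g_t\rangle_\FH|\le C(1+|s-t|)^{2H-2}$ is exactly the decay the paper also needs (and your localization sketch via the exponential weights is the standard way to prove it), and $4H-4<-1$ for $H<3/4$ makes the double integral $O(T)$. What the two approaches buy: the paper's is shorter and works for all $H\in(\tfrac12,1)$, at the cost of importing the Gaussian ergodic theorem as a black box; yours is self-contained and gives an explicit rate, but as written the $O(1/T)$ step leans on $H<3/4$ (for larger $H$ you would need a sparser subsequence, since only summability of $\E[|X_{T_n}|^2]$ is actually required). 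Your hypercontractivity remark (allowing $T_n=n$) mirrors the device the paper itself uses in Lemma~\ref{sub sequence}, so that variant is also sound.
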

\begin{proof}
Denote $Y_t=\sqrt{a}\int_{-\infty}^t e^{-\gamma (t-s)}\dif \zeta_s$. It is easy to see that $Y$ is
centered complex Gaussian process. It\^{o}'s isometry implies that for any $t\in \Rnum, s\ge 0$,
  \begin{align}
      E[Y_{t+s}\bar{Y}_t]={\alpha_H}a\int_{-s}^{ \infty}\dif v_1 \int_0^{ \infty} \dif v_2 e^{-\gamma (v_1+s)}e^{-\bar{\gamma} v_2 } \abs{v_1-v_2}^{2H-2} =E[Y_{s}\bar{Y}_0].\label{star}
   \end{align}
Thus $Y_t$ is stationary. It is easy to check that as $s\to \infty$, $E[Y_{s}\bar{Y}_0]\to 0$ with the same order as $\abs{s}^{2H-2}$, which implies that $\set{Y_t}$ is ergodic \cite[p78]{dym}.

Then we have that
\begin{align*}
   Z_t &=e^{-\gamma t}Z_0+\sqrt{a}\int_0^t e^{-\gamma (t-s)}\dif \zeta_s\\
   &=e^{-\gamma t}Z_0+\sqrt{a}\int_{-\infty}^t e^{-\gamma (t-s)}\dif \zeta_s-e^{-\gamma t}\sqrt{a}\int_{-\infty}^0 e^{\gamma s}\dif \zeta_s\\
   &=Y_t+e^{-\gamma t}(Z_0- Y_0)\,.
\end{align*}
The ergodicity and Cauchy-Schwarz inequality imply that as $T\to \infty$,
\begin{align*}
   \frac{1}{T}\int_0^T\abs{Z_t}^2\dif t&=\frac{1}{T}\int_0^T\Big[\abs{Y_t}^2-2\Re (e^{-\gamma t} (Z_0-Y_0) \bar Y_t)+e^{-2\lambda t}\abs{ Z_0-Y_0 }^2\Big]\dif t\\
   &\to\lim_{T\rightarrow \infty} \E[\abs{Y_T}^2] =  {a}{\alpha_H}d, \qquad a.s. 
\end{align*}
where the last equality is from Eq.(\ref{star}) and Lemma~\ref{deri}.
\end{proof}

Denote
\begin{equation}
\psi_t(r,s)=e^{-\bar{\gamma}(r-s)}\mathbf{1}_{\{0\le s\le r\le t\}}
\quad {\rm and}\quad    X_t=I_{1,1}(\psi_t(r,s)) \,.
\label{e.def-X}
\end{equation}

\begin{lem}\label{sub sequence}
As $n\to \infty$, the sequence $\set{\xi_n:=\frac{1}{ n} X_{n}}$ converges to zero almost sure.
\end{lem}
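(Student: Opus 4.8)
The plan is to show $\xi_n = \frac{1}{n}X_n \to 0$ almost surely by a Borel--Cantelli argument combined with a maximal inequality, exactly in the spirit of the Garsia--Rodemich--Rumsey approach announced in the introduction. First I would estimate the $L^2$-norm of $X_t = I_{1,1}(\psi_t)$: by It\^o's isometry \eqref{symmetric2}, $\E[\abs{X_t}^2] = \norm{\tilde\psi_t}^2$ up to a constant, and since $\psi_t(r,s) = e^{-\bar\gamma(r-s)}\mathbf 1_{\{0\le s\le r\le t\}}$ decays exponentially in $r-s$, a direct computation using $\phi(u,v) = \alpha_H\abs{u-v}^{2H-2}$ shows $\E[\abs{X_t}^2] \le C t$ for all $t\ge 1$ and some constant $C = C(\gamma,H)$; more generally $\E[\abs{X_t - X_s}^2] \le C\abs{t-s}$ for $s,t$ in a bounded interval after rescaling (the increment kernel $\psi_t - \psi_s$ is supported where one of the time variables lies in $[s,t]$, and one more variable is integrated against the integrable weight $\phi$). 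By the hypercontractivity of complex multiple Wiener--It\^o integrals (available from \cite{ch}, as invoked in the proof of Theorem~\ref{equilent cond}), all $L^p$ norms of $X_t$ are comparable to the $L^2$ norm, so in fact $\E[\abs{X_t}^p] \le C_p t^{p/2}$.

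Next I would pass to the subsequence $t = n$ along the integers. Fix $p$ large (say $p > 4$). Then $\E[\abs{\xi_n}^p] = n^{-p}\E[\abs{X_n}^p] \le C_p n^{-p/2}$, so $\sum_n \E[\abs{\xi_n}^p] < \infty$, and by Markov's inequality together with Borel--Cantelli, $\xi_n \to 0$ almost surely. This already gives the statement as literally phrased (convergence of $\set{\xi_n}$ along the integers). If one wants the full continuous-time statement $\frac1t X_t \to 0$ a.s.\ as $t\to\infty$, I would additionally control the oscillation of $X_t$ between consecutive integers: write $\sup_{n\le t\le n+1}\abs{X_t - X_n}$ and bound its $L^p$ moment using the Garsia--Rodemich--Rumsey inequality \cite[Theorem 2.1]{hyz 17} applied to the process $t\mapsto X_t$ on $[n,n+1]$, with the modulus estimate $\E[\abs{X_t-X_s}^p]^{1/p} \le C_p\abs{t-s}^{1/2}$ derived above; the GRR inequality converts this into an $L^p$ bound on the sup of the increments that is $O(1)$ uniformly in $n$, hence after dividing by $n$ and summing $n^{-p}$ over $n$ the Borel--Cantelli lemma again closes the argument.

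The main obstacle, and the step that needs the most care, is the quantitative increment estimate $\E[\abs{X_t - X_s}^2] \le C\abs{t-s}$ with a constant uniform in the base point — equivalently, showing that the ``variance growth'' of $X_t$ is genuinely linear rather than superlinear, despite the long-range dependence encoded in $\phi$. Here the exponential factor $e^{-\bar\gamma(r-s)}$ in $\psi_t$ is essential: it localizes the kernel near the diagonal $r\approx s$, and the computation reduces to integrals of the form $\int\int e^{-\gamma u}e^{-\bar\gamma v}\abs{u-v}^{2H-2}\,du\,dv$ (finite by Lemma~\ref{deri}) times the length of the time window, with the cross terms coming from the symmetrization in \eqref{ito-sense} handled the same way. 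Once this linear-growth bound is in hand, the rest is the routine Borel--Cantelli/GRR machinery sketched above.
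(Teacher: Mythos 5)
Your core argument for the lemma as stated is exactly the paper's: a linear-in-$n$ bound on $\E[\abs{X_n}^2]$ (the paper gets this from Lemma~\ref{lem 5_2 ht}, valid for $H<3/4$), hypercontractivity to upgrade to a higher moment, and then Chebyshev plus Borel--Cantelli along the integers (the paper uses the fourth moment where you use a general $p>4$, an immaterial difference). The additional material on increments and the Garsia--Rodemich--Rumsey inequality is not needed for this lemma --- in the paper that oscillation control is the separate Proposition~\ref{holder continu} --- so it is harmless surplus here.
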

\begin{proof}Denote $F_T=\frac{1}{\sqrt{T}}X_T$.
Lemma~\ref{lem 5_2 ht} implies that $
  \sup_{n }\E[\abs{F_{n} }^2]<\infty$.
From the    hypercontractivity of multiple Wiener-It\^{o} integrals \cite[Proposition 2.4]{ch}, we see that
$
  \sup_{n }\E[\abs{F_{n} }^4]<\infty$.
For any fixed $\varepsilon>0$, 
it follows from Chebyshev's inequality that
\begin{align*}
   P\big(\abs{\xi_n}> \varepsilon \big) =P\big(\abs{F_{n}}>\sqrt{n}\varepsilon \big) \le  \frac{1}{ {n}^2\varepsilon^4 }   \E[\abs{F_{n} }^4] \le \frac{3^4}{ {n}^2\varepsilon^4 }   \E[\abs{F_{n} }^2]^2\,.
   \end{align*}
The Borel-Cantelli lemma implies that $\set{\xi_n}$ converges to zero almost surely.
\end{proof}

\begin{prop}\label{holder continu}
For any real number $p\ge 2$ and integer $n\ge 1$,
\begin{align}
  B_n&:=\int_{n}^{n+1}\int_{n}^{n+1}\frac{\abs{X_t-X_s}^p}{\abs{t-s}^{2pH}} \dif s\dif t \label{bnbn}
\end{align}
is finite. Moreover, for any real numbers
$p>2$, $q>1$ and integer $n\ge 1$,
\begin{align}
   \abs{X_{t_2}-X_{t_1}}&\le R_{p,q} n^{q/p} ,\quad \forall t_1, t_2\in[n, n+1],
\end{align}
where $R_{p,q}$ is a random constant  independent of $n$.
\end{prop}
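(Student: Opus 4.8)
The plan is to estimate the increments of the process $X_t = I_{1,1}(\psi_t)$ by combining It\^o's isometry (hypercontractivity) with a Garsia-Rodemich-Rumsey (GRR) type argument. First I would show that $B_n < \infty$. Since $X_t - X_s = I_{1,1}(\psi_t - \psi_s)$ is a fixed-order complex multiple Wiener-It\^o integral, the hypercontractivity inequality for such integrals (cited as \cite[Proposition 2.4]{ch}) gives
\begin{equation*}
   \E[\abs{X_t - X_s}^p] \le C_p \big(\E[\abs{X_t - X_s}^2]\big)^{p/2} = C_p \big(\norm{\psi_t - \psi_s}^2_{\FH^{\otimes 2}}\big)^{p/2}
\end{equation*}
for every $p \ge 2$. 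The kernel difference $\psi_t - \psi_s$ is supported on the ``slab'' where one of the time arguments lies between $s$ and $t$ (together with the factor $e^{-\bar\gamma(r-\sigma)}$), so a direct computation with $\phi(u,v) = \alpha_H\abs{u-v}^{2H-2}$ yields a bound of the form $\norm{\psi_t-\psi_s}^2_{\FH^{\otimes 2}} \le C\abs{t-s}^{2H}$ for $s,t$ in a bounded interval (this is exactly the content of the self-similarity/increment estimate for the Wiener integral defining $X$, and should already be available from the earlier Lemma~\ref{lem 5_2 ht} referenced in the proof of Lemma~\ref{sub sequence}). Hence $\E[\abs{X_t-X_s}^p] \le C_{p,H}\abs{t-s}^{pH}$, and therefore
\begin{equation*}
   \E[B_n] = \int_n^{n+1}\int_n^{n+1} \frac{\E[\abs{X_t-X_s}^p]}{\abs{t-s}^{2pH}}\,\dif s\,\dif t \le C_{p,H}\int_n^{n+1}\int_n^{n+1}\abs{t-s}^{pH-2pH}\,\dif s\,\dif t = C_{p,H}\int_n^{n+1}\int_n^{n+1}\abs{t-s}^{-pH}\,\dif s\,\dif t.
\end{equation*}
Since $H < 3/4 < 1$, for $p$ close enough to $2$ (or more carefully, once $pH<1$) this double integral is finite; but actually we want the statement for \emph{all} $p\ge 2$, so I would instead keep the exponent as $\abs{t-s}^{pH-2pH}=\abs{t-s}^{-pH}$ and note that the claim ``$B_n$ is finite'' is an almost-sure statement about a fixed $\omega$ — here I would be more careful and argue that $B_n<\infty$ a.s. follows once we know $X$ has a version with H\"older-continuous paths of some exponent $>1/p$, which is what the second part delivers; alternatively restrict attention to the range of $p$ for which $\E[B_n]<\infty$ and deduce the rest from H\"older continuity. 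The cleanest route is: prove H\"older continuity first (next paragraph), then $B_n<\infty$ is immediate for every $p$.

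For the second, main assertion I would apply the GRR inequality \cite[Theorem 2.1]{hyz 17} on the interval $[n,n+1]$ with $\Psi(x) = \abs{x}^p$ and $p(u) = u^{2H}$ (so that $\Psi^{-1}(y) = y^{1/p}$ and the correction exponent is $2H - 2/p$, which is positive once $p$ is large). GRR gives the pathwise bound
\begin{equation*}
   \abs{X_{t_2} - X_{t_1}} \le 8\int_0^{\abs{t_2-t_1}} \Psi^{-1}\!\Big(\frac{4 B_n}{u^2}\Big)\,\dif p(u) \le C_{p,H}\, B_n^{1/p}\,\abs{t_2-t_1}^{2H - 2/p}, \qquad t_1,t_2\in[n,n+1].
\end{equation*}
Thus with $\abs{t_2-t_1}\le 1$ we get $\abs{X_{t_2}-X_{t_1}} \le C_{p,H}\,B_n^{1/p}$. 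It remains to control $B_n^{1/p}$ uniformly in $n$ up to the stated factor $n^{q/p}$. From $\E[B_n]\le C_{p,H}$ (uniformly in $n$, since the integrand estimate is translation-invariant and only the length of the interval matters), Chebyshev's inequality gives $P(B_n > n^q) \le C_{p,H}\,n^{-q}$, which is summable for $q>1$; so by Borel-Cantelli, $B_n \le n^q$ for all large $n$ almost surely, and absorbing the finitely many exceptional $n$ into the random constant yields $\abs{X_{t_2}-X_{t_1}} \le R_{p,q}\,n^{q/p}$ with $R_{p,q}$ independent of $n$. Here one uses $R_{p,q} = C_{p,H}\max\{1,\sup_{n \le N_0(\omega)} B_n\,n^{-q}\}^{1/p}$ where $N_0(\omega)$ is the a.s.-finite threshold.

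The main obstacle is the sharp increment estimate $\norm{\psi_t - \psi_s}^2_{\FH^{\otimes 2}} \le C\abs{t-s}^{2H}$: one must carefully decompose the kernel difference and handle the singular kernel $\abs{u-v}^{2H-2}$, checking that the exponential weight does not spoil the $\abs{t-s}^{2H}$ scaling on a bounded interval — this is where the restriction $H\in(1/2,3/4)$ and the structure of $\psi_t$ really enter, and it is presumably recorded in the earlier Lemma~\ref{lem 5_2 ht}. A secondary technical point is choosing $p$ large enough that the GRR exponent $2H-2/p$ is positive (so the a.s.\ H\"older continuity holds), and then noting that once a H\"older-continuous version exists, $B_n<\infty$ holds pathwise for every $p\ge 2$, which disposes of the first claim in full generality.
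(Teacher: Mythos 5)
Your architecture is the same as the paper's: bound the $p$-th moments of the increments via It\^o isometry plus hypercontractivity, apply Garsia--Rodemich--Rumsey on $[n,n+1]$ with $\Psi(x)=x^p$ and $\rho(u)=u^{2H}$, and control $B_n$ uniformly in $n$ through summability of $\E[B_n]/n^q$. The one place where you diverge is the crux, and it is where your argument breaks. You claim $\norm{\psi_{t_2}-\psi_{t_1}}^2_{\FH^{\otimes 2}}\le C\abs{t_2-t_1}^{2H}$, hence $\E[\abs{X_{t_2}-X_{t_1}}^p]\le C\abs{t_2-t_1}^{pH}$. Since the denominator in $B_n$ is $\abs{t-s}^{2pH}$, this only gives $\E[B_n]\lesssim\int\!\!\int\abs{t-s}^{-pH}\dif s\dif t=\infty$ for every $p\ge 2$ (as $pH\ge 2H>1$), so the Chebyshev/Borel--Cantelli step has nothing to work with. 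What the argument needs is the stronger estimate $\E[\abs{X_{t_2}-X_{t_1}}^2]\le \abs{t_2-t_1}^{4H}$, which makes the integrand of $\E[B_n]$ bounded by the constant $(p-1)^p$ independent of $n$; this is exactly what the paper proves, by writing $\psi_{t_2}-\psi_{t_1}=e^{-\bar{\gamma}(r-s)}\mathbf{1}_{\{t_1<s\le r\le t_2\}}$ so that all four integration variables in the $\FH^{\otimes2}$-norm are confined to $[t_1,t_2]$, whence the norm is dominated by $\int_{[t_1,t_2]^4}\phi\,\phi=(t_2-t_1)^{4H}$. Your fallback --- ``prove H\"older continuity first, then $B_n<\infty$ is immediate'' --- is circular (the H\"older bound is itself the output of the GRR step, which requires control of $B_n$) and also quantitatively insufficient: a variance bound of order $\abs{t-s}^{2H}$ can at best yield H\"older exponents $\beta<H$, and then the integrand of $B_n$ behaves like $\abs{t-s}^{p(\beta-2H)}$ with $p(\beta-2H)<-pH\le-1$, so pathwise finiteness of $B_n$ for all $p\ge2$ does not follow either.

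To close the gap you must therefore establish the $4H$ scaling of the increment variance, and here you should scrutinize the kernel identity the paper relies on: the difference of indicators $\mathbf{1}_{\{0\le s\le r\le t_2\}}-\mathbf{1}_{\{0\le s\le r\le t_1\}}$ equals $\mathbf{1}_{\{0\le s\le r,\;t_1<r\le t_2\}}$, a slab in which $s$ still ranges over all of $[0,r]$, not the triangle $\{t_1<s\le r\le t_2\}$. On the omitted region $\{s\le t_1<r\le t_2\}$ the exponential factor $e^{-\lambda(r-s)}$ controls the $s$-integrals only up to a constant, leaving a contribution of order $\int_{(t_1,t_2]^2}\phi(r_1,r_2)\dif r_1\dif r_2=(t_2-t_1)^{2H}$, which is precisely the weaker scaling you wrote down. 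So your instinct is pointing at a genuine subtlety in this step of the argument; it must be resolved (by exhibiting cancellation, or by adjusting the exponent in the definition of $B_n$ and rebalancing the GRR/Borel--Cantelli exponents accordingly) before either your reconstruction or the intended proof is complete.
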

\begin{proof}
   For any $n\le t_1\le t_2\le n+1$, It\^o's isometry implies that
\begin{align*}
 &  \E[\abs{X_{t_2}-X_{t_1}}^2] =\norm{\psi_{t_2}(r,s)-\psi_{t_1}(r,s)}^2_{\FH^{\otimes 2}} =\norm{e^{-\bar{\gamma}(r-s)}\mathbf{1}_{\{t_1< s\le r\le t_2 \}}}^2_{\FH^{\otimes 2}}\\
   &=\int_{t_1< s_1\le r_1\le t_2}\int_{t_1< s_2\le r_2\le t_2}e^{-\bar{\gamma}(r_1-s_1)-\gamma(r_2-s_2)} \phi(r_1,r_2)\phi(s_1,s_2)\dif s_1\dif s_2\dif r_1\dif r_2\\
   &\le \int_{t_1< s_1\le r_1\le t_2}\int_{t_1< s_2\le r_2\le t_2}\phi(r_1,r_2)\phi(s_1,s_2)\dif s_1\dif s_2\dif r_1\dif r_2
=(t_2-t_1)^{4H}.
\end{align*}
The hypercontractivity of multiple Wiener-It\^{o} integrals \cite[Proposition 2.4]{ch} implies that for any $p\ge 2$ and any  $n\le t_1\le t_2\le n+1$,
\begin{align*}
    \E[\abs{X_{t_2}-X_{t_1}}^p]&\le(p-1)^p\E[\abs{X_{t_2}-X_{t_1}}^2]^{\frac{p}{2}}\\
    &\le   (p-1)^p (t_2-t_1)^{2p H} .
\end{align*}
%
Take $\Psi(x)=x^p$ and $\rho(x)=x^{2H}$.  The above inequality yields
\begin{align}\label{pp1}
\E (B_n)=   \E\Big[\int_{n}^{n+1}\int_{n}^{n+1}\Psi\big(\frac{\abs{X_t-X_s}}{\rho(\abs{t-s})}\big)\dif s\dif t\Big]\le (p-1)^p.
\end{align}
For any $q>1$, we have
\[
\E   \left( \sum_{n=1}^\infty \frac{ B_n}{n^q}\right)
=\sum_{n=1}^\infty \frac{ \E(B_n)}{n^q}<\infty\,.
\]
This implies that
\[
\sum_{n=1}^\infty \frac{ B_n}{n^q}\le R_{p,q} \quad \hbox{for some random constant $R_{p,q}$}\,.
\]
Or we have
\begin{equation}
 B_n \le R_{p, q} n^q\quad \hbox{for all positive number
 $q>1$ and integer $n\ge 1$}\,.  \label{e.bn-bound}
\end{equation}
An application of the Garsia-Rodemich-Rumsey inequality \cite[Theorem 2.1]{hyz 17}  implies that
\begin{align*}
   \abs{X_t-X_s}&\le 8\int_0^{\abs{t-s}}\Psi^{-1}(\frac{4B_n}{u^2})\rho'(u)\dif u =16H\frac{(4B_n)^{1/p}}{2H-\frac{2}{p}}\abs{t-s}^{2H-\frac{2}{p}}
   \le c_{p } B_n^{\frac{1}{p}}\,.
\end{align*}
This combined with \eqref{e.bn-bound} proves  the proposition.
\end{proof}

Denote
   \begin{align}\label{ft ht}
       h_t(r,s)=e^{- {\gamma}(-r+s)}\mathbf{1}_{\{0\le r\le s\le t\}}\,.
   \end{align}
\begin{lemma}\label{lem 5_2 ht}
Let $H\in (\frac12,\frac34)$. Then  the following integrals are absolutely convergent
  \begin{align}
     &\lim_{T\to \infty}\frac{1}{\alpha_H^2 T} \int_{[0,T]^4}\psi_T(t_1,s_1)\overline{\psi_T(t_2,s_2)}\phi(t_1,t_2)\phi(s_1,s_2)\dif t_1\dif t_2 \dif s_1\dif s_2 =\sigma^2  \nonumber
\\
     &\lim_{T\to \infty}\frac{1}{\alpha_H^2 T}  \int_{[0,T]^4}\psi_T(t_1,s_1)\overline{h_T(t_2,s_2)}\phi(t_1,t_2)\phi(s_1,s_2)\dif t_1\dif t_2 \dif s_1\dif s_2= c +\mi b\,, \nonumber
  \end{align}
  where $\sigma^2$ and $c, b$ are defined by \eqref{limlim1} and
  \eqref{ab c}.
\end{lemma}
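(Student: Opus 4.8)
The plan is to compute each of the two limits explicitly by expanding the multiple integrals, exploiting the exponential decay coming from the drift term $\gamma = \lambda - \mi\omega$ with $\lambda>0$, and then reducing everything to convergent integrals over $[0,\infty)^2$ via Laplace-transform type identities. The absolute convergence claims will follow automatically once the relevant dominating integrals are shown to be finite, so the bulk of the work is the bookkeeping of the change of variables. Throughout I will write $\psi_T(t,s) = e^{-\bar\gamma(t-s)}\one_{\{0\le s\le t\le T\}}$ and $h_T(t,s) = e^{-\gamma(s-t)}\one_{\{0\le t\le s\le T\}}$, and recall the kernel identity $\phi(u,v) = \alpha_H|u-v|^{2H-2}$.

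First I would handle $\sigma^2$. Substituting the definitions and performing the changes of variables $t_1 - s_1 = x$, $t_2 - s_2 = y$ (keeping $s_1, s_2$ as the remaining variables), the four-fold integral splits into a "diagonal'' region where $s_1$ and $s_2$ range over an interval of length $\sim T$ and an "off-diagonal'' region. After symmetrizing and letting $T\to\infty$, the $\frac1T$ normalization picks out exactly the stationary contribution. The term $\int_{t_1 < s_1\le r_1\le t_2}\cdots$ style estimate already appearing in the proof of Proposition~\ref{holder continu} shows the integrand is absolutely integrable after normalization because $\int_0^\infty |u-v|^{2H-2}\,\mathrm dv$ converges relative to the exponential weights when $2H-2 > -1$, i.e. $H > 1/2$, and the overall $T^{-1}\cdot$(volume $\sim T$) stays bounded precisely when $4H - 3 < 0$, i.e. $H < 3/4$. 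Carrying out the $s_1,s_2$ integrals over $\R$ (which is legitimate in the limit) produces the convolution kernel $\frac{1}{x+y}$ through $\int_\R e^{-\bar\gamma(x + u)} e^{-\gamma(y+u)}\,\mathrm du$-type computations, while the remaining two Laplace-type integrals in $x,y$ against $\alpha_H x^{2H-2}$, $\alpha_H y^{2H-2}$ and against the relevant pole factors $(x+\bar\gamma)^{-1}$, $(y+\gamma)^{-1}$ yield the first summand of \eqref{limlim1}; the boundary/initial terms (the parts of the integral where the ``$-\infty$'' extension is not valid) contribute the second summand, which I would identify using $\Gamma$-function evaluations $\int_0^\infty x^{2H-2}e^{-\gamma x}\,\mathrm dx = \Gamma(2H-1)\gamma^{1-2H}$ together with Lemma~\ref{deri}.

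For the second limit, involving $\psi_T$ paired with $\overline{h_T}$, the computation is structurally the same but the indicator constraints are now $\{0\le s_1\le t_1\le T\}$ and $\{0\le t_2\le s_2\le T\}$, which after the analogous substitution produces the kernel combination $\big[\frac{1}{x+y} + \frac{1}{x+\gamma}\big]$ and the pole factor $(y+\gamma)^{-2}$ (the square arising because both exponents $e^{-\gamma(\cdot)}$ now carry the \emph{same} $\gamma$ rather than conjugate pair), matching \eqref{ab c}; since $\overline{h_T}$ has a holomorphic dependence on $\gamma$ there is no separate $|\gamma|$-term here, consistent with the formula for $c+\mi b$. I would again verify absolute convergence by the same $H\in(1/2,3/4)$ count.

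The main obstacle I anticipate is the careful justification that the $\frac1T$-normalized off-diagonal and boundary pieces vanish (or converge to their stated limits) rather than blowing up — in particular, tracking which region of the $(s_1,s_2)$-simplex contributes an $O(T)$ mass versus $o(T)$ and showing one may replace the finite intervals of integration by all of $\R_+$ or $\R$ in the limit. This is exactly the step where $H<3/4$ is used in an essential way (the integral $\int\!\!\int |s_1-s_2|^{2H-2}\,\mathrm ds_1\mathrm ds_2$ over a box of side $T$ grows like $T^{2H}$, so after the $T^{-1}$ it grows like $T^{2H-1}$ unless the exponential factors localize it — which they do along the relevant diagonal). Once this localization is established, everything else reduces to the standard Laplace/Gamma integral evaluations and rearrangement, which I would present compactly rather than in full detail.
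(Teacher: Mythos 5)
Your overall strategy --- pass to difference variables, use the exponential decay from $\lambda>0$ to localize, and evaluate the limit by Gamma/Laplace identities --- is the same in spirit as the paper's, which splits $\{0\le s_1\le t_1\le T,\ 0\le s_2\le t_2\le T\}$ into the six regions determined by the ordering of $s_1,t_1,s_2,t_2$, applies L'Hospital in $T$, and in each region substitutes the successive gaps (e.g.\ $a=t_2-s_2$, $b=s_1-t_2$, $c=T-s_1$). However, the one step you yourself flag as delicate --- absolute convergence and the role of $H<\tfrac34$ --- is exactly where your stated reasoning does not close. Your count (``$\iint|s_1-s_2|^{2H-2}$ over a box of side $T$ grows like $T^{2H}$, hence $T^{2H-1}$ after normalization, unless the exponentials localize it'') cannot be rescued by exponential localization: the exponentials damp only the internal gaps $t_1-s_1$ and $t_2-s_2$, whereas the dangerous direction is the separation $b$ between the two ordered pairs, which carries no exponential weight at all. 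What actually saves the integral is that \emph{both} kernels degenerate simultaneously in that direction, $\phi(t_1,t_2)\phi(s_1,s_2)\sim b^{4H-4}$, and $\int_1^\infty b^{4H-4}\,\dif b<\infty$ precisely when $H<\tfrac34$; this is the content of the paper's elementary bound $(b+c)(a+b)\ge ac\,\mathbf{1}_{[0,1]}(b)+b^2\,\mathbf{1}_{[1,\infty)}(b)$. Without this observation the convergence claim is unproved.

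A second, related gap: the factor $\frac{1}{x+y}$ in \eqref{limlim1} does not come from integrating the exponentials over $s_1,s_2$ (your integral $\int e^{-\bar\gamma(x+u)}e^{-\gamma(y+u)}\,\dif u$ evaluates to a multiple of $e^{-\bar\gamma x-\gamma y}$, not to $1/(x+y)$). In \eqref{limlim1} the variables $x,y$ are Laplace-dual variables: one writes $u^{2H-2}=\Gamma(2-2H)^{-1}\int_0^\infty e^{-xu}x^{1-2H}\,\dif x$ for each of the two singular kernels and then integrates the \emph{undamped} gap variable $b$ over $[0,\infty)$, which produces $\int_0^\infty e^{-(x+y)b}\,\dif b=\frac{1}{x+y}$ together with the density $(xy)^{1-2H}/\Gamma(2-2H)^2$ and the pole factors $(x+\bar\gamma)^{-1}(y+\gamma)^{-1}$ from the $a$- and $c$-integrations. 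Identifying your physical variables $t_1-s_1$, $t_2-s_2$ with the $x,y$ of \eqref{limlim1} will not reproduce the stated constants. Finally, the second summand of \eqref{limlim1} is not a boundary correction to the first: it is the sum of the limits over the four interleaved/nested orderings ($\Delta_3,\dots,\Delta_6$ in the paper), each evaluated by its own Gamma integral; the stationary-extension picture you describe obscures this decomposition and would need to be replaced by the explicit case analysis to yield the four distinct powers $|\gamma|^{2-4H}$, $\gamma^{2-4H}$, $\bar\gamma^{2-4H}$.
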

\begin{proof}
   We only evaluate the first integral since the other one is similar.
We divide the domain $\set{0\le s_1\le t_1\le T,0\le s_2\le t_2\le T}$  into six disjoint regions according to the distinct orders of $s_1,t_1,s_2,t_2$:
\begin{eqnarray*}
\Delta_1&=&\left\{0\le s_2\le t_2\le s_1\le t_1\le T\right\}\,,\qquad
\Delta_2=\left\{
0\le  s_1\le t_1\le s_2\le t_2 \le T\right\}\\
\Delta_3&=& \left\{0\le  s_1\le s_2\le t_1\le t_2 \le T\right\}\,,  \qquad
\Delta_4=\left\{ 0\le s_2\le  s_1\le t_2\le t_1 \le T\right\}\,,  \\
\Delta_5&=&\left\{ 0\le  s_1\le s_2\le t_2\le t_1 \le T\right\} \,,
\qquad \Delta_6=\left\{  0\le s_2\le  s_1\le t_1\le t_2 \le T \right\}\,.
\end{eqnarray*}
We also denote $I_i=\frac{1}{T}\int_{\Delta_i} \psi_T(t_1,s_1)\overline{\psi_T(t_2,s_2)}\phi(t_1,t_2)\phi(s_1,s_2)\dif t_1\dif t_2 \dif s_1\dif s_2$,
$i=1, \cdots, 6$.

   Firstly, we consider $I_1$.
   It follows from L'Hospital rule that
   \begin{align}
      &\lim_{T\to \infty} I_1=\lim_{T\to \infty} \int_0^T \dif s_1 \int_{0}^{s_1} \dif t_2\int_{0}^{t_2} \dif s_2 e^{-\bar{\gamma}(T-s_1)}e^{-\gamma(t_2-s_2)}\phi(T,t_2)\phi(s_1,s_2)\,.
   \end{align}
Making substitution  $a=t_2-s_2,\,b=s_1-t_2,\,c=T-s_1$,   we have that as $T\to \infty$
   \begin{align}
     & \lim_{T\rightarrow \infty}I_1=\alpha_H^2 \lim_{\rightarrow \infty} \int_{a,b,c\ge 0,a+b+c\le T} \dif a\dif b \dif c e^{-\bar{\gamma} c} e^{-\gamma a}[(b+c)(a+b)]^{2H-2}\nonumber\\
     &= \alpha_H^2 \int_{[0,\infty)^3} \dif a\dif b \dif c\, e^{-\bar{\gamma} c} e^{-\gamma a}[(b+c)(a+b)]^{2H-2}.\label{limitt 54}
     \end{align}
The above integral is absolutely convergent when $H\in (\frac12,\frac34)$. In fact, since
   \begin{equation*}
    (b+c)(a+b)\ge ac\mathbf{1}_{[0,1]}(b) +b^2\mathbf{1}_{[1,\infty)}(b),
   \end{equation*}we have that
   \begin{align*}
     & \abs{\int_{[0,\infty)^3} \dif a\dif b \dif c\, e^{-\bar{\gamma} c} e^{-\gamma a}[(b+c)(a+b)]^{2H-2}}\\
&\le  \int_{[0,\infty)^3} \dif a\dif b \dif c\,e^{-\lambda c} e^{-\lambda a} \big[ b^{4H-4}\mathbf{1}_{[1,\infty)}+ (ac)^{2H-2}\mathbf{1}_{[0,1]}(b)\big]\\
&=\frac{1}{(3-4H)\lambda^2}+\big(\frac{\Gamma(2H-1)}{\lambda^{2H-1}}\big)^2.
   \end{align*}
  Substituting the equality of Gamma function $\int_0^{\infty} e^{-x\beta}x^{\alpha -1}=\frac{\Gamma(\alpha)}{\beta^\alpha}$ with $\alpha>0,\,\Re{\beta}>0$ into (\ref{limitt 54}), we have that
\begin{align}
  & \int_{[0,\infty)^3} \dif a\dif b \dif c\, e^{-\bar{\gamma} c} e^{-\gamma a}[(b+c)(a+b)]^{2H-2}\nonumber\\
 &=\frac{1}{\Gamma(2-2H)^2}\int_{[0,\infty)^5} \dif a\dif b \dif c\dif x\dif y\, e^{-\bar{\gamma} c} e^{-\gamma a}e^{-x (b+c)}x^{1-2H}e^{-y (a+b)}y^{1-2H}\nonumber\\
  &=\frac{1}{\Gamma(2-2H)^2}\int_{[0,\infty)^2} \dif x\dif y \frac{(xy)^{1-2H}}{(x+y)(x+\bar{\gamma})(y+\gamma)}.\label{limt 33}
\end{align}

It is easy to see that  $I_2=I_1$.   In a similar  way as for $I_1$, we have
\begin{eqnarray}
\lim_{T \to \infty}   I_3
&=&\lim_{T \to \infty}   I_4
 =\frac{\alpha_H^2\Gamma^2(2H-1)}{2\lambda \abs{\gamma}^{4H-2}},\\
\lim_{T \to \infty}   I_5
&=& \frac{\alpha_H^2\Gamma^2(2H-1)}{2\lambda \bar{\gamma}^{4H-2}},\\
 \lim_{T \to \infty}   I_6
 &=&\frac{\alpha_H^2\Gamma^2(2H-1)}{2\lambda {\gamma}^{4H-2}}.\label{limitt 77}
\end{eqnarray}

Finally, by adding (\ref{limt 33})-(\ref{limitt 77}) together, we get (\ref{limlim1}).
\end{proof}

\begin{lemma}\label{con equa 0}
Let $\psi_T,\,h_T$ be as in (\ref{e.def-X}) and (\ref{ft ht}) respectively. As $T\to \infty$, we have that:
\begin{equation}
   \frac{1}{T}\psi_T\otimes_{0,1}\psi_T\to 0,\, \frac{1}{T}\psi_T\otimes_{1,0}\psi_T\to 0,\, \frac{1}{T}\psi_T{\otimes}_{0,1}h_T\to 0,\, \frac{1}{T}\psi_T{\otimes}_{1,0}h_T\to 0,\, \text{in }\, \FH^{\otimes 2}.
\end{equation}
\end{lemma}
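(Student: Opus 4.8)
The plan is to establish the four contraction-norm limits by a direct estimate modelled on Lemma~\ref{lem 5_2 ht}. First a reduction: since $h_T(r,s)=\overline{\psi_T(s,r)}$ and $\RE\gamma=\lambda>0$, all four kernels obey the pointwise bound $|\psi_T(r,s)|,\,|h_T(r,s)|\le e^{-\lambda|r-s|}\mathbf 1_{\{0\le r\vee s\le T\}}$, so it suffices to treat one representative, say $\tfrac1T\psi_T\otimes_{1,0}\psi_T$, the remaining three following by the same argument with the order indicators in different positions. Specializing Definition~\ref{contra} to $p_1=q_1=p_2=q_2=1$, $(i,j)=(1,0)$ gives the one-line formula
\begin{align*}
(\psi_T\otimes_{1,0}\psi_T)(t,s)=\int_{[0,\infty)^2}\phi(u,u')\,\psi_T(u,s)\,\psi_T(t,u')\,\dif u\,\dif u',
\end{align*}
with obvious analogues for $\psi_T\otimes_{0,1}\psi_T$, $\psi_T\otimes_{0,1}h_T$ and $\psi_T\otimes_{1,0}h_T$.

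Substituting this into $\norm{\psi_T\otimes_{1,0}\psi_T}_{\FH^{\otimes 2}}^2=\int(\psi_T\otimes_{1,0}\psi_T)(t_1,s_1)\,\overline{(\psi_T\otimes_{1,0}\psi_T)(t_2,s_2)}\,\phi(t_1,t_2)\phi(s_1,s_2)\,\dif t_1\dif t_2\dif s_1\dif s_2$ produces an $8$-fold integral over $[0,T]^8$ whose integrand is the product of four $\psi_T$-factors (two conjugated) and four $\phi$-factors. The combinatorial point is that the incidence pattern of these eight factors on the eight variables is a single $8$-cycle whose edges alternate between an ``exponential'' edge (from a $\psi_T$) and a ``power-law'' edge $\phi(x,y)=\alpha_H|x-y|^{2H-2}$; the same holds, mutatis mutandis, for the other three contractions. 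Discarding the order indicators and bounding each exponential by $e^{-\lambda|\cdot|}$, the task is reduced to showing
\begin{align*}
J_T:=\int_{[0,T]^{8}}\prod_{k=1}^{4}e^{-\lambda|a_{2k-1}-a_{2k}|}\prod_{k=1}^{4}\alpha_H|a_{2k}-a_{2k+1}|^{2H-2}\,\dif a_1\cdots\dif a_8=o(T^2),
\end{align*}
indices taken cyclically ($a_9=a_1$).

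To finish, integrate out the four variables carrying an exponential weight; each such integration replaces the pair it joins by $K(x)=\int_{\R}e^{-\lambda|x-y|}|y|^{2H-2}\,\dif y$, which is bounded near $0$ (as $2H-2>-1$) and decays like $|x|^{2H-2}$ at infinity, so $K(x)\le C(1\wedge|x|^{2H-2})$. Hence $J_T\le C\int_{[0,T]^{4}}\prod_{i=1}^{4}\big(1\wedge|c_i-c_{i+1}|^{2H-2}\big)\,\dif c$ ($c_5=c_1$). Passing to gap coordinates $d_i=c_i-c_{i+1}$, $i=1,2,3$, and leaving one center variable free over $[0,T]$ bounds this by $C\,T$ times an integral over $[-T,T]^3$ whose integrand is $\le1$ near the origin and at infinity is comparable to a product of four factors $|\cdot|^{2H-2}$; that integral is $O(1)$ for $H\le\tfrac58$ and $O(T^{8H-5})$ for $\tfrac58<H<\tfrac34$ (with a logarithmic correction at $H=\tfrac58$). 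Consequently $J_T=O\big(T^{\,1\vee(8H-4)}\big)$ up to a logarithm, which is $o(T^2)$ exactly because $8H-4<2\iff H<\tfrac34$; therefore $\tfrac1{T^2}\norm{\psi_T\otimes_{1,0}\psi_T}_{\FH^{\otimes 2}}^2\to0$, and the same estimate for the remaining three contractions proves the lemma.

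The only real obstacle is the bookkeeping: isolating the $8$-cycle structure and extracting the honest power of $T$ from $J_T$. No delicate cancellation enters — the whole argument runs through a nonnegative upper bound — and the standing hypothesis $H<\tfrac34$ plays here the same role as in Lemma~\ref{lem 5_2 ht}, namely it makes the web of power laws integrable at infinity. (One could instead deduce the lemma from the implication $(\mathrm{ii})\Rightarrow(\mathrm{iii})$ of Theorem~\ref{equilent cond} applied to a normalization of $X_T$ once $\E[\abs{X_T}^4]$ is understood, but the direct route is self-contained.)
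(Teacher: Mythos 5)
Your proof is correct, but it follows a genuinely different route from the paper's. The paper's proof hinges on a \emph{pointwise} bound on the contraction kernel that is uniform in $T$: splitting into the cases $t<s$ and $s<t$ (the latter into three sub-integrals $I_1(T),I_2(T),I_3(T)$, each estimated via L'Hospital-type asymptotics), it shows $\abs{\psi_T\otimes_{0,1}\psi_T(t,s)}\le c_{\lambda,H}\,\alpha_H\abs{t-s}^{2H-2}$, whence $\norm{\tfrac1T\psi_T\otimes_{0,1}\psi_T}^2_{\FH^{\otimes 2}}\le \tfrac{c}{T^2}\int_{[0,T]^4}\phi(t_1,s_1)\phi(t_2,s_2)\phi(t_1,t_2)\phi(s_1,s_2)=O(T^{8H-6})$ by a one-line scaling; it then reduces $\otimes_{1,0}$ to $\otimes_{0,1}$ and \emph{quotes} Lemma 5.4 of the web-only appendix of Hu and Nualart for the two mixed contractions with $h_T$. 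You instead expand the squared norm into the eight-fold alternating-cycle integral, pass to absolute values, and power-count. Your route is self-contained (no outsourcing of the $h_T$ cases), treats all four contractions uniformly since only $\abs{\psi_T},\abs{h_T}\le e^{-\lambda\abs{r-s}}\mathbf 1_{[0,T]^2}$ enters, and replaces the paper's delicate three-case analysis by the single elementary convolution bound $K(x)\le C(1\wedge\abs{x}^{2H-2})$; the price is the heavier bookkeeping of the $8$-cycle and the case split at $H=\tfrac58$ (your exponent $T^{1\vee(8H-4)}$ for the un-normalized squared norm matches the paper's $T^{8H-4}$ for $H>\tfrac58$, and the threshold $H<\tfrac34$ comes out identically). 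The paper's route, by contrast, yields the stronger and potentially reusable intermediate fact that the contraction kernel is dominated by a constant multiple of $\phi$ uniformly in $T$. I checked the key steps of your version — the identification of the alternating incidence structure, the bound on $K$, and the convergence of $\int_{[-1,1]^3}\abs{e_1e_2e_3}^{2H-2}\abs{e_1+e_2+e_3}^{2H-2}\,\dif e$ precisely for $H>\tfrac58$ — and they are sound; just make sure to state explicitly that the four even-indexed variables can be integrated out independently because each appears in exactly one exponential and one power-law factor, and that extending their range from $[0,T]$ to $\R$ is legitimate since the integrand is nonnegative.
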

\begin{proof}
When $0<t<s<T$,  we have that
\begin{eqnarray}
&&\frac{1}{\alpha_H}  \left|\psi_T\otimes_{0,1}\psi_T(t,s)
   \right|
 =\left|  \int_0^t\dif u_1 \int_s^T\,\dif u_2 e^{-\bar{\gamma}(t-u_1)}e^{-\bar{\gamma}(u_2-s)}\abs{u_1-u_2}^{2H-2}\right|
 \nonumber  \\
&\le&   \int_0^t\dif u_1 \int_s^T\,\dif u_2 e^{-\lambda(t-u_1)}e^{-\lambda (u_2-s)}\abs{u_2-u_1}^{2H-2}\nonumber \\
 &\le& (s-t)^{2H-2}\int_0^t\dif u_1 \int_s^T\,\dif u_2 e^{-\lambda(t-u_1)}e^{-\lambda (u_2-s)}
  \le\frac{1}{\lambda^2} (s-t)^{2H-2}\,. \label{s ge t}
\end{eqnarray}
%

When $s< t$, we have that
\begin{eqnarray*}
    & &\int_0^t\dif u_1 \int_s^T\,\dif u_2 e^{-\lambda(t-u_1)}e^{-\lambda (u_2-s)}\abs{u_2-u_1}^{2H-2}\\
  &=&\Big(\int_0^s\dif u_1 \int_s^T\,\dif u_2+\int_s^t\dif u_1 \int_s^t\,\dif u_2+\int_s^t\dif u_1 \int_t^T\,\dif u_2\Big) e^{-\lambda(t-u_1)}\\
  &&\qquad e^{-\lambda (u_2-s)}\abs{u_2-u_1}^{2H-2}=I_1(T)+I_2(T)+I_3(T)\,.
\end{eqnarray*}
For the first term, we have that
\begin{align*}
  I_1(T)
   &=e^{-\lambda (t-s)}\int_0^s\dif u_1 \int_{s-u_1}^{T-u_1}\,\dif z  e^{-\lambda z}z^{2H-2}\\
   &=e^{-\lambda (t-s)}\int_{0}^{T}\,\dif z  e^{-\lambda z}z^{2H-2}\int_{0\vee (s -z)}^{s\wedge (T-z)}\dif u_1 \\
   &\le e^{-\lambda (t-s)}\int_{0}^{T}\,\dif z  e^{-\lambda z}z^{2H-2}[s- (s -z)] \\
   &\le e^{-\lambda (t-s)}\frac{\Gamma(2H)}{\lambda^{2H}}\le c_{\lambda,H} \abs{t-s}^{2H-2},
\end{align*} where $c_{\lambda,H}$ is a constant independent of $T$.
For the second term, we have that
\begin{align*}
 I_2(T)
   &= 2 e^{-\lambda (t-s)} \int_s^t\dif u_1 \int_s^{u_1}\,\dif u_2 e^{\lambda(u_1-u_2)}(u_1-u_2)^{2H-2}\\
   &= 2 e^{-\lambda (t-s)} \int_s^t\dif u_1 \int_{0}^{u_1-s}\,\dif z e^{\lambda z}z^{2H-2}\\
   &=2 e^{-\lambda (t-s)} \int_0^{t-s}\dif z e^{\lambda z}z^{2H-2}(t-s-z) \le c_{\lambda,H} \abs{t-s}^{2H-2},
\end{align*} where $c_{\lambda,H}$ is a constant independent of $T$ and the last inequality is by means of L'Hospital rule. In fact, when $H\in (\frac12,1)$, then we have that $$\lim_{x\to\infty}\frac{ \int_0^{x}\dif z e^{\lambda z}z^{2H-2}(x-z)}{e^{\lambda x} x^{2H-2}}=\frac{1}{\lambda^2}.$$
For the third term, we have that
\begin{align*}
  I_3(T)
   &=  e^{-\lambda (t-s)} \int_s^t\dif u_1 \int_{t-u_1}^{T-u_1}\,\dif z e^{-\lambda z}z^{2H-2}\\ 
   &= e^{-\lambda (t-s)} \int_0^{T}\dif z e^{-\lambda z}z^{2H-2} \int_{s\vee (t-z)}^{t\wedge(T-z)}\,\dif u_1\\
   &\le e^{-\lambda (t-s)} \int_0^{t-s}\dif z e^{-\lambda z}z^{2H-2}[t-(t-z)]\\
   &\le e^{-\lambda (t-s)}\frac{\Gamma(2H)}{\lambda^{2H}}\le c_{\lambda,H} \abs{t-s}^{2H-2},
\end{align*} where $c_{\lambda,H}$ is a constant independent of $T$. Thus,  we have that
\begin{align}
   &\abs{\int_0^t\dif u_1 \int_s^T\,\dif u_2 e^{-\bar{\gamma}(t-u_1)}e^{-\bar{\gamma}(u_2-s)}\abs{u_1-u_2}^{2H-2} }
  \le c_{\lambda,H} \abs{t-s}^{2H-2}.\label{kerc}
\end{align}
This  inequality   together with the inequality (\ref{s ge t}) implies that
\begin{align*}
   \norm{\frac{1}{T}\psi_T\otimes_{0,1}\psi_T}^2_{\FH^{\otimes 2}}&\le \frac{\alpha_H^2 c_{\lambda,H}^2}{T^2}\norm{\phi}^2_{\FH^{\otimes 2}}\\
   &=\frac{\alpha_H^2 c_{\lambda,H}^2}{T^2}\int_{[0,T]^4}\dif t_1\dif t_2\dif s_1\dif s_2 \phi(t_1,s_1){\phi}(t_2,s_2)\phi(t_1,t_2)\phi(s_1,s_2).
\end{align*}
As $T\to \infty$, L'Hospital rule and the symmetric property of the above integrand imply that when $H\in (\frac12,\frac34)$,
\begin{align*}
   \lim_{T\to \infty} \norm{\frac{1}{T}\psi_T\otimes_{0,1}\psi_T}^2_{\FH^{\otimes 2}}&\le \lim_{T\to \infty}\frac{2\alpha_H^2 c_{\lambda,H}^2}{T}\int_{[0,T]^3}\dif t_2\dif s_1\dif s_2 \phi(T,s_1)\phi(t_2,s_2)\phi(T,t_2)\phi(s_1,s_2)\\
   &=\lim_{T\to \infty} \frac{ 2\alpha_H^2 c_{\lambda,H}^2}{T^{6-8H}}\int_{[0,1]^3}\dif t_2\dif s_1\dif s_2 \phi(1,s_1)\phi(t_2,s_2)\phi(1,t_2)\phi(s_1,s_2)\\
   &=0 .
\end{align*}

Finally, it is easy to obtain that $\psi_T\otimes_{1, 0}\psi_T=f_T\otimes_{0,1}f_T$. Thus $\frac{1}{T}\psi_T\otimes_{1, 0}\psi_T\to 0$ also holds as $T\to \infty$. In addition, it follows from Lemma 5.4 of web-only Appendix of \cite{huNua} that both $\frac{1}{T}\psi_T\otimes_{1, 0}h_T\to 0$ and $\frac{1}{T}\psi_T\otimes_{0,1}h_T\to 0 $ hold.
\end{proof}

\noindent{\it Proof of Theorem~\ref{strong law}.\,}
Without loss of generality, we can suppose that $Z_0=0$. By (\ref{hat gamma}), we obtain that
\begin{equation}\label{hat gamm-gamm}
    \hat{\gamma}_T-\gamma=\sqrt{a}\frac{\frac{1}{ {T}}X_T}{\frac{1}{T}\int_0^T \abs{Z_t}^2\dif t}.
\end{equation}
By Proposition~\ref{prop 5 1}, we need only to show $\frac{1}{ {T}} X_T$ converges to zero almost sure as $T\to \infty$. Clearly, we have that
\begin{align}
   \abs{\frac{1}{{T}}X_T}&\le \frac{1}{T}\abs{X_T-X_n }+ {\frac{n}{T}} \frac{1}{ {n}} \abs{X_n}.\label{2 epsilon}
\end{align}
where  $n=[T]$ is the biggest integer less than or equal to a real number $T$.
Using Lemma~\ref{sub sequence} and since $n/T$ is bounded, we see
  that  the second term in
  (\ref{2 epsilon}) goes to $0$ almost surely  as $T\to\infty$.

By Proposition~\ref{holder continu}, we see that
the first term in  (\ref{2 epsilon}) is
bounded by $\frac{1}{T} R_{p,q} n^{q/p}$ for any $p\ge 2 $ and
$q>1$. Choosing  $q<p$   we see that the  first
term in (\ref{2 epsilon}) goes to $0$ as $T\rightarrow\infty$.
This completes the proof of the first part of Theorem
\ref{strong law}.

Now we turn to the proof of the second part. Denote $F_T=\frac{1}{\sqrt{T}}X_T$. Clearly,
    \begin{align}
       \bar{F}_T=\frac{1}{\sqrt{T}}\int_{[0,T]^2} e^{- {\gamma}(-r+s)}\mathbf{1}_{\{r\le s\}} \dif  \zeta_r \dif\bar{\zeta}_s=\frac{1}{\sqrt{T}}I_{1,1}(h_T(r,s)).
    \end{align}
From Theorem~\ref{equilent cond}, Lemma~\ref{lem 5_2 ht} and Lemma~\ref{con equa 0}, we see \[
\hbox{ $F_T$ converges in law to $\zeta\sim \mathcal{N}(0,\frac{
   \sigma_H^2}{2}\tensor{C})$,}
   \]
    where $\tensor{C}$ as in Theorem~\ref{strong law}.
We write  Equation (\ref{hat gamm-gamm}) as
\begin{equation*}
   \sqrt{T}(\hat{\gamma}_T-\gamma)=\sqrt{a}\frac{F_T}{\frac{1}{T}\int_0^T \abs{Z_t}^2\dif t}.
\end{equation*}
Therefore, it follows from the above fact,
 Proposition~\ref{prop 5 1}, and Slutsky's theorem that
$ \sqrt{T}(\hat{\gamma}_T-\gamma)$  converges in distribution to bivariate Gaussian law $\mathcal{N}(0,\frac{1}{2d^2a}\tensor{C})$.
{\hfill\large{$\Box$}}\\
\vskip 0.2cm {\small {\bf  Acknowledgements}:
Y. Chen is supported by the China Scholarship Council (201608430079);
Y. Hu is partially supported by the Simons Foundation (209206);
Z. Wang is supported by the Mathematical Tianyuan Foundation of China (11526117) and the Zhejiang Provincial Natural Science Foundation (LQ16A010006).}


\end{document}